    \thanks{The
research of L.H. was supported by the Deutsche Forschungsgemeinschaft (DFG, German Research
Foundation)  in the framework to the collaborative research center  "Multiscale Simulation Methods for Soft-Matter Systems" (TRR 146) under Project No. 233630050 and Project-ID 443891315 within SPP 2265.The research of C.M.\ was partly funded by the German Research Foundation (project number 443916008, priority program SPP2265). A.K.'s research is funded by the Cusanuswerk (Bischöfliche Studienförderung).}
\newtheorem{prop}[theorem]{Proposition}
\renewcommand{\P}{\mathbb{P}}
\newcommand{\1}{\mathbbm{1}}
\newcommand{\G}{\mathscr{G}}
\newcommand{\E}{\mathbb{E}}
\newcommand{\R}{\mathbb{R}}
\newcommand{\T}{\mathscr{T}}
\newcommand{\M}{\mathscr{M}}
\newcommand{\N}{\mathbb{N}}
\newcommand{\zigzag}[1]{%
	\begin{tikzpicture}[#1]%
		\draw (0,0) -- (0.5ex,0ex);%
		\draw (0.5ex,0ex) -- (0.5ex,1ex);%
		\draw (0.5ex,1ex) -- (1ex,1ex);%
	\end{tikzpicture}%
}
\newcommand{\Clbrc}{C^\infty_{\mbox{\zigzag{}}}}
\newcommand{\CM}[1]{\todo[color=blue!50!white, inline]{\color{black} \textbf{CM:} #1}}
\numberwithin{equation}{section}
\begin{document}
\raggedbottom


\section{Introduction and main result}
In recent years, properties of level set percolation and local properties of the zero-average Gaussian free field (GFF) on finite graphs and trees have been studied intensively; see, for example, \cite{AbaechCerny_trees,AbacherliCerny2, CernyIII,DrewitzGalloPrev,sznitman2015disconnection,sznitman2016coupling} In the present article, we focus on the extreme value statistics of such fields  on random $r$-regular graphs and identify the proper scaling of the extremes. The only previous results in this direction for regular graphs are the rough upper bound in \cite{Conchon23}, and the work \cite{DasHazra2019} in the rigid setting of the Euclidean torus. Note that whereas \cite{DasHazra2019} relies on a coupling introduced in \cite{Abaecherli_torus}, we develop a different approach to obtain our main result: we combine a general Gaussian comparison principle for point processes (Theorem~\ref{thm:gauss-comp-ppp}) with estimates for the Green's function on random $r$-regular graphs (Section~4.1) that complement the estimates in \cite{AbacherliCerny2}. We believe both of these contributions to be of independent interest.

Throughout, we let $(\G_N,o_N)$ denote a uniformly rooted $r$-regular graph on $N\in 2\N$ vertices. The corresponding distribution on finite graphs is denoted by $\mathbf{P}$ and the graph distances in $\G_N$ are written as $d_{\G_N}(\cdot,\cdot)$. We work under the law
\[
  \P_N \;=\; \int \P^{\G_N}\, \mathrm{d}\mathbf P(\G_N),
\]
where $\P^{\G_N}$ is the process law conditional on the graph realisation $\G_N$. Given $\G_N$, we consider the canonical simple random walk $(X_k)_{k\ge0}$, 
its continuous-time counterpart $(\overline X_t)_{t\ge0}$, 
and the corresponding zero-average GFF $\Psi_N$ on $\G_N$. 
The latter is defined as the centred Gaussian field with covariances given by
\[
  G_{\G_N}(x, y) 
  := \int_0^\infty \big( \P_x^{\G_N}(\overline{X}_t = y) - \tfrac{1}{N} \big)\, \mathrm{d}t, 
  \qquad x, y \in \G_N,
\]
that is, the \emph{zero-average Green function} of $\G_N$. We further set
\begin{equation}\label{eq:rescaling}
  \sigma_r^2 := \frac{r-1}{r-2}, 
  \qquad
  a_N := \sqrt{2\log N} - \frac{\log\log N + \log(4\pi)}{2\sqrt{2\log N}}, 
  \qquad
  b_N := a_N^{-1},
\end{equation}
and define the rescaled field $Z_x := \frac{\Psi_N(x)}{\sigma_r}$, for  $x\in V_N$. The associated extremal point process and centred maximum are given by
\[
  \mathcal P_N := \sum_{x\in V_N}\delta_{\frac{Z_x-a_N}{b_N}},
  \qquad
  M_N := \max_{x\in V_N}\frac{Z_x-a_N}{b_N}.
\]
Let further $\mathcal P_\infty$ be a Poisson point process on $\R$ with intensity $e^{-x}\,\mathrm{d}x$, 
and set
\[
  M_\infty := \max \mathcal P_\infty.
\]
Denoting by $\Rightarrow$ convergence in distribution, our main result can now be stated as follows:
\begin{theorem}[Extremal point process and maximum on the random $r$-regular graph]\label{thm:graph-ext-proc}
Let $r\ge3$. Then, with the notation above, as $N\to\infty$, there exist sets of graphs 
$\Omega_N,\,N\in\N$, such that $\mathbf{P}(\Omega_N^\mathsf{c})=o(1)$ and on $\Omega_N$
\[
  (\mathcal P_N,\,M_N)\ \overset{\P^{\G_N}}\Longrightarrow\ (\mathcal P_\infty,\,M_\infty).
\]
Consequently, $(\mathcal P_N,\,M_N)\ \overset{\P_N}\Longrightarrow\ (\mathcal P_\infty,\,M_\infty)$, 
which in particular implies that $M_N$ is asymptotically Gumbel-distributed, so that
\[
  \P_N\Big( \max_{x\in \G_N} \Psi_N(x)\ \le\ 
  \sigma_r\Big(a_N+t b_N\Big)\Big)
  \ \longrightarrow\ \exp(-e^{-t}).
\]
\end{theorem}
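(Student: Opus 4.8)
The plan is to show that, on a suitable high-probability event $\Omega_N$ for the random graph, the correlated field $(Z_x)_{x\in V_N}$ is indistinguishable at the level of its extremal point process from a family of $N$ independent standard Gaussians, and then to invoke classical extreme value theory for the i.i.d.\ case. The vehicle for the first step is the Gaussian comparison principle for point processes, Theorem~\ref{thm:gauss-comp-ppp}, whose error is governed entirely by the pairwise covariances $\operatorname{Cov}(Z_x,Z_y)=G_{\G_N}(x,y)/\sigma_r^2$. Thus the substantive work reduces to the Green function estimates of Section~4.1, and I would split these into a diagonal and an off-diagonal part.

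For the diagonal, I would first establish that the variances $G_{\G_N}(x,x)$ concentrate around the tree value $\sigma_r^2=\tfrac{r-1}{r-2}$ for all but $o(N)$ vertices. Because $\G_N$ is locally tree-like, comparing the zero-average Green function with the Green function of the $r$-regular tree (which equals $\sigma_r^2$ on the diagonal) yields $G_{\G_N}(x,x)=\sigma_r^2+o(1)$ uniformly over the set of vertices whose neighbourhoods up to radius $\sim\varepsilon\log_{r-1}N$ are trees; the few remaining vertices, lying on short cycles, are negligible in number. After dividing by $\sigma_r$ this guarantees that $Z_x$ has variance $1+o(1)$, which is precisely what is needed so that the universal centring and scaling constants $a_N,b_N$ of \eqref{eq:rescaling}---the classical constants for $N$ standard Gaussians---apply.

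For the off-diagonal part I would show that the covariances are uniformly small. The tree Green function decays like $(r-1)^{-d_{\G_N}(x,y)}$, and typical distances in $\G_N$ are of order $\log_{r-1}N$, so for the overwhelming majority of pairs $G_{\G_N}(x,y)$ is polynomially small in $N$. The delicate point is to control the atypically close pairs: using that random $r$-regular graphs contain few short cycles and that the number of pairs at distance at most $\delta\log_{r-1}N$ is $o(N^2)$, together with the exponential Green decay, one verifies that the total comparison error produced by Theorem~\ref{thm:gauss-comp-ppp}---a sum over pairs weighted by a function of their covariance---tends to zero. This is the step I expect to be the main obstacle, since it requires uniform, quantitative control of $G_{\G_N}(x,y)$ across \emph{all} scales of distance simultaneously and a careful accounting of the rare close pairs; it is exactly here that the estimates complementing \cite{AbacherliCerny2} are needed, and it is this analysis that defines $\Omega_N$ with $\mathbf{P}(\Omega_N^{\mathsf c})=o(1)$.

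With these two ingredients in hand, on $\Omega_N$ Theorem~\ref{thm:gauss-comp-ppp} reduces the convergence of $\mathcal P_N$ to that of the extremal point process of $N$ i.i.d.\ standard Gaussians under the same normalisation, which is the classical result: the exceedance process converges to the Poisson point process $\mathcal P_\infty$ of intensity $e^{-x}\,\d x$. Joint convergence of $(\mathcal P_N,M_N)$ to $(\mathcal P_\infty,M_\infty)$ then follows because the maximum is a continuous functional of the point process on the relevant event, the limiting intensity ensuring a well-defined, finite largest atom. The annealed statement is obtained by integrating the quenched convergence over the graph law and using $\mathbf{P}(\Omega_N^{\mathsf c})=o(1)$ with bounded convergence. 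Finally, unwinding the definitions $Z_x=\Psi_N(x)/\sigma_r$ and $M_N=\max_x(Z_x-a_N)/b_N$ converts the Gumbel law of $M_\infty$ into the stated limit $\P_N\big(\max_{x}\Psi_N(x)\le\sigma_r(a_N+tb_N)\big)\to\exp(-e^{-t})$.
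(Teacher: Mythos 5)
Your overall architecture coincides with the paper's own proof: Gaussian comparison via Theorem~\ref{thm:gauss-comp-ppp} against an i.i.d.\ standard Gaussian field, Green function input from Section~4.1, a good/bad vertex dichotomy, a near/far splitting of pairs by graph distance, and integration of the quenched statement to get the annealed one. However, two of your steps fail as stated. First, your claim that $\operatorname{Var}(Z_x)=1+o(1)$ ``is precisely what is needed'' for the i.i.d.\ constants $a_N,b_N$ to apply is false: a variance perturbation of relative size $\epsilon_N$ shifts the rescaled points by roughly $\epsilon_N a_N/b_N=\epsilon_N a_N^2\asymp \epsilon_N\log N$ in units of $b_N$, so a rate $\epsilon_N=o(1/\log N)$ is required, and a bare $o(1)$ admits error rates under which the theorem is simply wrong (take $\epsilon_N=1/\sqrt{\log N}$). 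The paper obtains polynomial precision, $\sqrt{\operatorname{Var}(\Psi_N(x))}=\sigma_r\bigl(1+O(N^{-k_4})\bigr)$ via Lemma~\ref{lem:goodgreenbound}, and then explicitly absorbs the resulting shift $\delta_N=o(a_N^{-1})$ in \eqref{eq:witherror}; your tree-like balls of radius $\varepsilon\log N$ would in fact deliver such polynomial precision, but your stated sufficiency criterion does not.

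Second, discarding the $\ell_N$-bad vertices because they are ``negligible in number'' is not a complete argument: $\mathcal P_N$ charges all $N$ vertices, and $o(N)$ vertices with (a priori) inflated variances could still populate every bounded window or dominate the maximum. The paper closes this with a union bound at level $\sigma_r(a_N+tb_N)$, which needs the uniform Green bound of Proposition~\ref{prop:collection}(iv) to control the bad-vertex variances, and a calibration of the bad-vertex count from Lemma~\ref{lem:badlem} (the specific choices of $\varepsilon_0$ and $z=N^\alpha$) so that the contribution is $o(N)\cdot N^{-1+o(1)}=o(1)$. Relatedly, your counting criterion for close pairs --- ``the number of pairs at distance at most $\delta\log_{r-1}N$ is $o(N^2)$'' --- is too weak for the comparison sum: for $r=3$, $N^{3/2}$ pairs at distance $1$ would satisfy it, yet their contribution would be of order $N^{3/2}\rho_1 H(\rho_1)=N^{3/2-2/(1+\rho_1)+o(1)}=N^{1/6+o(1)}\to\infty$. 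What the paper actually uses is the scale-by-scale bound $n_k\le C_1N(r-1)^k$ paired with $\rho_k\le C_2(r-1)^{-k}$, so that $n_k\rho_k\le CN$, together with the far-range bound $\rho_k\le CN^{-k_3}$ beyond $k_\star$ and the strict gap $\rho_1\le c_0<1$, which yields $H(\rho_1)=N^{-2/(1+\rho_1)+o(1)}$ with $2/(1+\rho_1)>1$ --- this is exactly where $r\ge3$ enters. Your sketch correctly locates the difficulty here, but the quantitative mechanism you propose would not carry the estimate.
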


Our proof of Theorem~\ref{thm:graph-ext-proc} relies on two key ingredients.
\begin{itemize}
    \item[(1)] A {\emph Gaussian comparison principle} for the joint convergence of the rescaled extremal process and the maximum, stated and proven in Section 2. In Section 3 we showcase the usefulness of this method by deploying it for the rescaled extremal process and the rescaled maximum  of the Gaussian free field on finite subsets of the $r$-regular infinite tree, see Theorem \ref{thm:tree-ext-proc}.
    \item[(2)] Precise `almost uniform' {\emph Green function estimates}: In $\mathbf{P}$- probability the Green’s function on $\G_N$ can locally be well approximated by the one on the $r-1$-ary tree for asymptotically almost all vertices. See Section 4.1 for the details. 
\end{itemize}
Finally, Section 4.2 contains the proof of Theorem \ref{thm:graph-ext-proc}.

\section{A Gaussian comparison principle}
Our first result is a Gaussian comparison principle for the joint convergence of the extremal process and the maximum, which is instrumental in deriving our main results. Gaussian comparison techniques have proven to be a powerful technique in the study of extremes of Gaussian fields on graphs, cf.\ \cite{berestycki2018extremes,bramson2012tightness,bramson2016convergence,chiarini2015,chiarini2016,DingRoyZeit17,BiskupLouidor18}.
Throughout this section, we let $$X(0)=\big(X_{ij}(0): 1\leq i\leq j, j\in \N\big) \text{ and } X(1)=\big(X_{ij}(1): 1\leq i\leq j, j\in \N\big)$$
denote generic Gaussian triangular arrays which are independent of each other. Their interpolation is given as 
\[
X(h)=\sqrt{1-h}\, X(0)+\sqrt{h}\,X(1),\quad h\in(0,1),
\]
where scalar multiplication and addition are to be understood entry-wise. For given $N\in\N$, we further write $ X_N(h)=\big(X_{1N}(h),\dots, X_{NN}(h)\big)$ for the $N$-th row vector of $X(h)$, assume that each $X_N(h)$ is a centred Gaussian vector in $\R^N$, and denote by 
\[
\Sigma(N,h)=\operatorname{Cov}\big(X_N(h)\big)=(1-h)\operatorname{Cov}\big(X_N(0)\big)+h\operatorname{Cov}\big(X_N(1)\big), \quad h\in [0,1],
\]
the corresponding covariance matrices. 

\begin{theorem}\label{thm:gauss-comp-ppp}
	Assume that, as $N\to\infty,$
	\begin{itemize}
		\item there are sequences $(a_N)_{N\in\N}$ and $(b_N)_{N\in\N}$ such that
		\begin{equation}\label{eq.ppp}
			\left(\sum_{i=1}^{N} \delta_{(X_{iN}(0)-a_N)/b_N}, \max_{1\le i\le N} \frac{X_{iN}(0)-a_N}{b_N}\right)
			\;\Longrightarrow\; \left(\mathcal{P}_\infty, \max \mathcal{P}_\infty\right),
		\end{equation}
		where $\mathcal{P}_\infty$ is a point process on $\R$ with $\mathcal{P}_\infty\big((0,\infty)\big)<\infty$;
		\item for every bounded and open set $S\subset\R$,
		\begin{equation}\label{eq:tozero}
			\frac{1}{b_N^{2}}\int_0^1\!\sum_{\substack{1\le i\ne j\le N}}
			\big|\Sigma_{ij}(N,1)-\Sigma_{ij}(N,0)\big|\,
			\P\left(\tfrac{X_{iN}(h)-a_N}{b_N}\in S,\ \tfrac{X_{jN}(h)-a_N}{b_N}\in S\right)\,\mathrm{d}h
			\ \to\ 0.
		\end{equation}
	\end{itemize}
	Then, as $N\to\infty,$
	\begin{equation}
		\left(\sum_{i=1}^{N} \delta_{(X_{iN}(1)-a_N)/b_N}, \max_{1\le i\le N} \frac{X_{iN}(1)-a_N}{b_N}\right)
		\;\Longrightarrow\; \left(\mathcal{P}_\infty, \max \mathcal{P}_\infty\right).
	\end{equation}
\end{theorem}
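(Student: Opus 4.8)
The plan is to prove the convergence for $X(1)$ by transferring it from the assumed convergence \eqref{eq.ppp} for $X(0)$ along the interpolating path $(X(h))_{h\in[0,1]}$. Concretely, I would fix a convergence-determining class of smooth bounded functionals and show that their expectations under $X_N(1)$ and $X_N(0)$ differ by $o(1)$; combined with \eqref{eq.ppp} this yields the claim. For the class I take test functionals of the exponential-additive form
\[
F(x)=\exp\Big(-\sum_{i=1}^N u\big(\tfrac{x_i-a_N}{b_N}\big)\Big),\qquad x\in\R^N,
\]
where $u\colon\R\to[0,\infty)$ is $C^2$ with compactly supported derivative; thus $u$ is eventually constant and $0\le F\le 1$. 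Writing $u=\phi+\psi$ with $\phi\ge0$ smooth and compactly supported and $\psi$ a smooth nondecreasing barrier equal to $0$ below $t-\varepsilon$ and to a large constant $c$ above $t$, the map $\phi\mapsto\E[F(X_N)]$ recovers the Laplace functional of the extremal process, while sending $c\to\infty$ recovers the event $\{M_N\le t\}$; hence this family determines the joint law of $(\mathcal P,\,M)$. The hypothesis $\mathcal P_\infty((0,\infty))<\infty$ guarantees that $M_\infty=\max\mathcal P_\infty$ is a.s.\ finite and that the barrier approximation is well behaved in the limit.

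The core step is the Gaussian interpolation (``smart path'') identity: since $X_N(h)$ is centred Gaussian with covariance $(1-h)\operatorname{Cov}(X_N(0))+h\operatorname{Cov}(X_N(1))$,
\[
\E[F(X_N(1))]-\E[F(X_N(0))]=\frac12\int_0^1\sum_{i,j}\big(\Sigma_{ij}(N,1)-\Sigma_{ij}(N,0)\big)\,\E\big[\partial_i\partial_j F(X_N(h))\big]\,\d h.
\]
Under the natural normalisation that the variances agree, $\Sigma_{ii}(N,0)=\Sigma_{ii}(N,1)$ — which is exactly why \eqref{eq:tozero} ranges only over $i\ne j$ — the diagonal terms drop out. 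For $i\ne j$ a direct computation gives $\partial_i\partial_j F(x)=b_N^{-2}\,u'(\tfrac{x_i-a_N}{b_N})\,u'(\tfrac{x_j-a_N}{b_N})\,F(x)$, so that, choosing a bounded open $S\supset\operatorname{supp}(u')$ and using $0\le F\le1$,
\[
\big|\E[\partial_i\partial_j F(X_N(h))]\big|\le\frac{\|u'\|_\infty^2}{b_N^2}\,\P\Big(\tfrac{X_{iN}(h)-a_N}{b_N}\in S,\ \tfrac{X_{jN}(h)-a_N}{b_N}\in S\Big).
\]
Summing over $i\ne j$ and integrating in $h$, the left-hand side $|\E[F(X_N(1))]-\E[F(X_N(0))]|$ is bounded by a constant multiple of the expression in \eqref{eq:tozero} and therefore vanishes as $N\to\infty$. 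Since $\E[F(X_N(0))]$ converges by \eqref{eq.ppp}, so does $\E[F(X_N(1))]$, to the same limit.

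It remains to upgrade this functional convergence to the joint distributional statement, which I expect to be the main obstacle. Vague convergence of the extremal processes follows by ranging $\phi$ over all nonnegative smooth compactly supported functions. The delicate point is the maximum, because $\mu\mapsto\sup\operatorname{supp}\mu$ is not continuous in the vague topology, as mass may escape to $+\infty$. I would control this by the barrier device above: for fixed $t$ and $c$ the comparison identifies $\lim_N\E[F(X_N(1))]$, and interchanging $\lim_{c\to\infty}$ with $\lim_N$ — justified by monotonicity in $c$ together with the $X(0)$-limit — yields $\P(M_N^{(1)}\le t)\to\P(M_\infty\le t)$ jointly with the process. The requisite tightness of $M_N^{(1)}$ from above, ruling out escape to $+\infty$, follows from the matching marginals, since $\P(M_N^{(1)}>t)\le\sum_i\P(X_{iN}(1)>a_N+tb_N)=\sum_i\P(X_{iN}(0)>a_N+tb_N)$ and the right-hand side is controlled by \eqref{eq.ppp}. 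Assembling the vague convergence of the process, the tightness of the maximum, and the barrier limit gives the joint convergence $(\mathcal P_N^{(1)},M_N^{(1)})\Rightarrow(\mathcal P_\infty,M_\infty)$, completing the proof.
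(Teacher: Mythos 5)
Your core argument is the paper's argument: the same Gaussian interpolation identity applied to exponential--additive functionals $F(x)=\exp\big(-\sum_i u\big(\tfrac{x_i-a_N}{b_N}\big)\big)$ with $u\ge0$ and $u'$ compactly supported (essentially the paper's cone $\Clbrc$), the same observation that the diagonal terms drop because $\operatorname{Var}(X_{iN}(0))=\operatorname{Var}(X_{iN}(1))$ (an implicit normalisation in the paper's proof as well, where $\sigma_{iN}^2$ is set equal for both endpoints), and the same bound $|\partial_i\partial_jF|\le b_N^{-2}\,\|u'\|_\infty^2\,\1\{\cdot\in S\}\,\1\{\cdot\in S\}$ reducing the difference of functionals to the quantity in \eqref{eq:tozero}. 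Where you differ is the packaging: the paper delegates the equivalence between convergence of these functionals and joint convergence of the pair (process, maximum) to Lemma~\ref{lem:pp-equivalence}, quoted from \cite{berestycki2018extremes}, whereas you reprove both directions by hand. Be aware that this equivalence is needed on \emph{both} ends of the interpolation: your assertion that ``$\E[F(X_N(0))]$ converges by \eqref{eq.ppp}'' is itself not automatic for barrier-type $u$ without compact support, since $F$ is then not a vaguely continuous functional; this is precisely direction (i)$\Rightarrow$(iii) of Lemma~\ref{lem:pp-equivalence} and uses the max-component of the hypothesis \eqref{eq.ppp}. You flag the analogous subtlety only for the $X(1)$ side.

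There is one genuinely flawed step. Your tightness argument claims that $\sum_i\P(X_{iN}(0)>a_N+tb_N)$ ``is controlled by \eqref{eq.ppp}''. It is not: that sum equals $\E\big[\mathcal P_N(0)\big((t,\infty)\big)\big]$, a \emph{first moment} of the exceedance count, and weak convergence of the extremal process carries no information about expectations --- the expected number of exceedances may diverge along a sequence whose extremal point process and maximum both converge, and nothing in the hypotheses of the theorem is a moment condition. Fortunately the step is dispensable: taking $\psi$ nondecreasing with $\psi=0$ on $(-\infty,t-\varepsilon]$ and $\psi=1$ on $[t,\infty)$, one has the deterministic sandwich $\P\big(\max\mathcal P_N\le t-\varepsilon\big)\le\E\big[e^{-c\langle\mathcal P_N,\psi\rangle}\big]\le\P\big(\max\mathcal P_N<t\big)+e^{-c}$, valid for every $N$ and $c$, which already rules out escape of mass to $+\infty$, since any point above $t$ forces $e^{-c\langle\mathcal P_N,\psi\rangle}\le e^{-c}$; combining this with the interpolation limit, letting $c\to\infty$ and then $\varepsilon\downarrow0$ at continuity points of the law of $\max\mathcal P_\infty$ yields the distribution-function convergence without any union bound. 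With that repair --- or by simply invoking Lemma~\ref{lem:pp-equivalence} as the paper does --- your proof is complete and coincides with the paper's.
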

Before addressing the proof, we collect a few auxiliary results. We start with a consequence of the Gaussian integration-by-parts formula, see e.g.\ \cite[Lemma 4.1]{bovier2005extreme} for a detailed derivation.
\begin{lemma}\label{lem:gauss-interp}
	Let $Y(0)=\big(Y_1(0),\dots,Y_n(0)\big)$ and $Y(1)=\big(Y_1(1),\dots,Y_n(1)\big)$ be independent centred Gaussian vectors in $\mathbb{R}^n$ with covariance matrices $\Lambda(i)=\big(\Lambda_{ij}(i)\big), i\in{0,1}$ satisfying $\Lambda_{ii}(j)=1$ for all $(j,i)\in\{0,1\}\times\{1,\dots,n\}$. Set further, for $h\in(0,1)$
	\[
	Y(h)= \sqrt{h} Y(1)+\sqrt{1-h}\,Y(0),
	\qquad
	\Lambda(h)= \operatorname{Cov}\big(Y(h)\big),
	\]
	and let $F\in C^{2}(\R^n,\R)$ be any function satisfying the moderate growth condition
	\[
	|F(x)|+\|\nabla F(x)\|+\|\!\operatorname{Hess}F(x)\|\le C\,\big(1+\|x\|\big)^{m},\quad x\in\R^n,
	\]
	for some $C<\infty, m\in\N$, where $\|\cdot\|$ denotes, for definiteness, the $\max$-norm. 
	Then the map $h\mapsto \E\big[F\big(Y(h)\big)\big]$ is continuously differentiable on $[0,1]$ and
	\begin{equation}\label{eq:interp-integral}
		\E\big[F\big(Y(1)\big)-F\big(Y(0)\big)\big]
		\;=\;\frac12\sum_{i,j=1}^{n}\int_{0}^{1}\bigl(\Lambda_{ij}(1)-\Lambda_{ij}(0)\bigr)\,
		\E\big[\partial_{ij}F\big(Y(h)\big)\big]\,\mathrm{d}h,
	\end{equation}
	where $\partial_{ij}F, 1\leq i,j\leq n$, denote the entries of $\operatorname{Hess} F$.
\end{lemma}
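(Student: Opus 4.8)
The plan is to prove the identity by the classical \emph{smart-path} (Gaussian interpolation) method: realise both sides of \eqref{eq:interp-integral} as the endpoint values of the curve $\varphi(h):=\E[F(Y(h))]$ and compute $\varphi'(h)$ explicitly by Gaussian integration by parts. First I would record the two structural facts that drive the computation. Since $Y(0)$ and $Y(1)$ are independent and centred, $Y(h)$ is centred Gaussian with covariance $\Lambda(h)=h\,\Lambda(1)+(1-h)\,\Lambda(0)$, which is affine in $h$; and, again by independence, $\operatorname{Cov}(Y_k(1),Y_l(h))=\sqrt h\,\Lambda_{kl}(1)$ while $\operatorname{Cov}(Y_k(0),Y_l(h))=\sqrt{1-h}\,\Lambda_{kl}(0)$. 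By the fundamental theorem of calculus, the claimed formula is then equivalent to
\[
\varphi'(h)=\tfrac12\sum_{k,l}\bigl(\Lambda_{kl}(1)-\Lambda_{kl}(0)\bigr)\,\E\bigl[\partial_{kl}F(Y(h))\bigr].
\]

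For the computation of $\varphi'$, I would differentiate under the expectation. Writing $\dot Y_k(h)=\tfrac{1}{2\sqrt h}Y_k(1)-\tfrac{1}{2\sqrt{1-h}}Y_k(0)$, the chain rule gives $\varphi'(h)=\sum_k\E[\partial_kF(Y(h))\,\dot Y_k(h)]$. Since $(Y(0),Y(1))$ is jointly Gaussian and $\partial_kF$ is $C^1$ with polynomial growth (as $F\in C^2$ with $\operatorname{Hess}F$ of moderate growth), the multivariate Gaussian integration-by-parts identity applies to each summand: for the centred jointly Gaussian pair $(Y_k(1),Y(h))$,
\[
\E\bigl[Y_k(1)\,\partial_kF(Y(h))\bigr]=\sum_l\operatorname{Cov}(Y_k(1),Y_l(h))\,\E\bigl[\partial_{kl}F(Y(h))\bigr]=\sqrt h\sum_l\Lambda_{kl}(1)\,\E\bigl[\partial_{kl}F(Y(h))\bigr],
\]
and symmetrically with $Y_k(0)$ and $\sqrt{1-h}\,\Lambda_{kl}(0)$. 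Substituting, the singular prefactors $1/\sqrt h$ and $1/\sqrt{1-h}$ cancel exactly against the covariances, leaving the clean expression for $\varphi'(h)$ above; integrating over $h\in[0,1]$ yields \eqref{eq:interp-integral}. The Gaussian integration-by-parts identity itself may be cited (as indicated after the statement) or reduced to the one-dimensional Stein identity by regressing $Y_k(1)$ linearly on $Y(h)$.

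The main obstacle is purely analytic: legitimising the interchange of $\tfrac{\d}{\d h}$ with $\E$, and establishing that $\varphi\in C^1([0,1])$ up to the endpoints, where the coefficients $1/\sqrt h$ and $1/\sqrt{1-h}$ blow up. I would handle this by first working on a compact subinterval $[\delta,1-\delta]\subset(0,1)$, where the coefficients are bounded and the difference quotients are dominated by an integrable random variable of the form $\tfrac{C}{2\sqrt\delta}\bigl(1+\|Y(0)\|+\|Y(1)\|\bigr)^{m}\bigl(|Y_k(0)|+|Y_k(1)|\bigr)$, using $\|Y(h)\|\le\|Y(0)\|+\|Y(1)\|$ and the moderate-growth bound on $\nabla F$; dominated convergence then justifies the differentiation and gives $\varphi\in C^1((0,1))$ with the displayed derivative. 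Crucially, that displayed derivative \emph{no longer contains the singular factors}: by $\|\operatorname{Hess}F(x)\|\le C(1+\|x\|)^m$ and uniform integrability of $(1+\|Y(h)\|)^m$, each $h\mapsto\E[\partial_{kl}F(Y(h))]$ is continuous on the closed interval $[0,1]$, so $\varphi'$ extends continuously there, and $\varphi$ is continuous at the endpoints since $Y(h)\to Y(0),Y(1)$ in $L^2$. Hence the fundamental theorem of calculus applies and produces the identity. I note finally that the unit-diagonal normalisation $\Lambda_{kk}(\cdot)=1$ plays no role in the identity itself and is only part of the ambient set-up.
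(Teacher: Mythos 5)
Your proof is correct and follows exactly the route the paper itself endorses: the paper gives no proof of this lemma, instead citing \cite[Lemma~4.1]{bovier2005extreme} for precisely this smart-path derivation via Gaussian integration by parts, with the singular prefactors $1/\sqrt{h}$ and $1/\sqrt{1-h}$ cancelling against the covariances $\sqrt{h}\,\Lambda_{kl}(1)$ and $\sqrt{1-h}\,\Lambda_{kl}(0)$. Your analytic justification (domination of the difference quotients on $[\delta,1-\delta]$, continuity of $h\mapsto\E[\partial_{kl}F(Y(h))]$ on all of $[0,1]$ from the moderate-growth bound, and the fundamental theorem of calculus) is sound and complete.
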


	   
We say that a function $\varphi:\R\to\R$ has \emph{l-bounded support} if there exists $K<\infty$, such that $\varphi(x)=0$ for all $x\leq-K$. A differentiable function $\varphi$ is \emph{eventually constant (on the right)} if $x\mapsto\varphi'(-x)$ has l-bounded support. We denote by $\Clbrc$ the cone of all non-decreasing, eventually constant, smooth functions with l-bounded support. Note that $\varphi\in\Clbrc$ implies that $\operatorname{supp}(\varphi')$ is a bounded open set and, conversely, for every bounded open set $S$ we can find a function $\varphi\in \Clbrc$ such that $\operatorname{supp}(\varphi')=S$. 

\begin{lemma}[{\cite[Lemma 4.1, parts (i),(ii), and (iv)]{berestycki2018extremes}}]\label{lem:pp-equivalence}
	
	Let $(P_t: t\in \N\cup\{\infty\})$ denote a family of point processes on $\mathbb{R}$ where $P_\infty\big((0,\infty)\big) < \infty$ almost surely.  
	The following statements are equivalent:
	\begin{enumerate}[(i)]
		\item $(P_t, \max P_t) \Longrightarrow(P_\infty, \max P_\infty)$ jointly, as $t\to\infty$;
		\item $P_t \Longrightarrow P_\infty$ and $\max P_t \Longrightarrow \max P_\infty$, as $t\to\infty$;
		\item $\mathbb{E}\left[e^{-\langle P_t,\varphi \rangle}\right] \;\to\; 
		\mathbb{E}\left[e^{\langle P_\infty,\varphi\rangle}\right],$ as $t\to\infty$, 
		for all $\varphi\in\Clbrc$.
	\end{enumerate}
\end{lemma}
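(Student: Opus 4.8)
The plan is to establish the cycle of implications $(i)\Rightarrow(ii)\Rightarrow(iii)\Rightarrow(i)$. The implication $(i)\Rightarrow(ii)$ is immediate: joint convergence of the pair $(P_t,\max P_t)$ projects onto convergence of each coordinate, the second coordinate being the maximum by definition. The content therefore lies in $(ii)\Rightarrow(iii)$ and, above all, in $(iii)\Rightarrow(i)$.

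For $(ii)\Rightarrow(iii)$ the only difficulty is that a test function $\varphi\in\Clbrc$ is bounded but not compactly supported, so $\mu\mapsto e^{-\langle\mu,\varphi\rangle}$ is not a vaguely continuous functional and cannot be fed directly into $P_t\Rightarrow P_\infty$. I would circumvent this by a cutoff controlled by the maximum. Since $\max P_\infty<\infty$ almost surely (as $P_\infty((0,\infty))<\infty$) and $\max P_t\Rightarrow\max P_\infty$, the family $\{\max P_t\}$ is tight, so $\sup_t\P(\max P_t>u)\to0$ as $u\to\infty$. Fixing such a level $u$, I would replace $\varphi$ by a nonnegative, continuous, compactly supported $\tilde\varphi$ that agrees with $\varphi$ on $(-\infty,u]$. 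On the event $\{\max P_t\le u\}$ the sums $\langle P_t,\varphi\rangle$ and $\langle P_t,\tilde\varphi\rangle$ coincide, whence $|\E[e^{-\langle P_t,\varphi\rangle}]-\E[e^{-\langle P_t,\tilde\varphi\rangle}]|\le\P(\max P_t>u)$, and likewise for $P_\infty$. As $\tilde\varphi$ is now continuous with compact support, $\E[e^{-\langle P_t,\tilde\varphi\rangle}]\to\E[e^{-\langle P_\infty,\tilde\varphi\rangle}]$ by vague convergence; letting finally $u\to\infty$ removes the cutoff error and yields $(iii)$.

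The heart of the argument is $(iii)\Rightarrow(i)$. Here I would exploit that the processes under consideration have only finitely many points above any level and accumulate only at $-\infty$, so each is determined by its descending counting function $N_\mu(v):=\mu((v,\infty))$, $v\in\R$, with $\max\mu=\inf\{v:N_\mu(v)=0\}$. For levels $v_1<\dots<v_k$ and weights $\lambda_1,\dots,\lambda_k\ge0$, smoothing the indicators $\1_{(v_j,\infty)}$ from below by functions $g_j\in\Clbrc$ (each $g_j$ nondecreasing, equal to $0$ below $v_j$ and to $1$ slightly above) keeps $\sum_j\lambda_j g_j\in\Clbrc$, and $(iii)$ gives convergence of $\E[\exp(-\langle P_t,\sum_j\lambda_j g_j\rangle)]$. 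Sharpening the $g_j$ to the indicators, and choosing the $v_j$ among the at most countably many continuity levels of the limit, identifies the limit as the joint Laplace transform $\E[\exp(-\sum_j\lambda_j N_{P_\infty}(v_j))]$. Convergence of these transforms for all finite collections of levels and weights yields convergence of all finite-dimensional distributions of the counting processes, hence convergence in distribution of the full ranked sequence of atoms, and therefore the \emph{joint} convergence of $(P_t,\max P_t)$, since both coordinates are continuous functionals of that sequence.

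I expect the main obstacle to be precisely this last step: justifying that the monotone class $\Clbrc$ is convergence-determining for the joint object. The delicate points are the interchange of limits, namely sharpening $g_j$ to indicators while sending $t\to\infty$, which I would handle by monotonicity in the weights together with restriction to continuity levels, and the complementary extraction of the maximum, for which I would test against a single steep function $\lambda g$ with $\{g>0\}=(u,\infty)$ and let $\lambda\to\infty$. The elementary bounds $\1\{\max\mu\le u\}\le e^{-\lambda\langle\mu,g\rangle}\le e^{-\lambda g(\max\mu)}$ then pin $\lim_{\lambda\to\infty}\E[e^{-\lambda\langle\mu,g\rangle}]=\P(\max\mu\le u)$ and, via a $\limsup/\liminf$ sandwich at continuity levels, upgrade $(iii)$ to convergence of the maxima consistent with the counting-process limit. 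The conceptual crux throughout is that no mass escapes to $+\infty$, which is exactly what the finiteness $P_\infty((0,\infty))<\infty$ and the induced tightness of the maxima guarantee.
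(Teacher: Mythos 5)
You should know at the outset that the paper contains no proof of this lemma: it is imported wholesale from \cite{berestycki2018extremes} (Lemma 4.1 there), so the only comparison available is with that source, whose philosophy — that Laplace functionals over the cone $\Clbrc$ of smoothed increasing step functions simultaneously encode the local point process and, via steep steps, the maximum — is exactly the one you reconstruct. Your cycle $(i)\Rightarrow(ii)\Rightarrow(iii)\Rightarrow(i)$ is sound: the cutoff-by-the-maximum argument for $(ii)\Rightarrow(iii)$ is correct as stated, and in $(iii)\Rightarrow(i)$ the two-sided sandwich $\1_{(v+\epsilon,\infty)}\le g\le\1_{(v,\infty)}$ together with monotonicity of $\mu\mapsto e^{-\langle\mu,\varphi\rangle}$ in $\varphi$, evaluated at levels $v$ that are not fixed atoms of $P_\infty$, is precisely the right mechanism for sharpening to indicators. (You also silently corrected the sign typo in the paper's statement of (iii).) The one step you rightly flag as delicate deserves a warning: the jointness of $(P_t,\max P_t)$ cannot be obtained from continuous mapping on the vague space, since $\mu\mapsto\max\mu$ is discontinuous everywhere there (e.g.\ $\delta_0+\delta_n\to\delta_0$ vaguely while the maxima diverge). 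Your ranked-atom route works once made rigorous at non-atomic levels; the standard cleaner completion is tightness of the pair, the observation that $\max$ is vaguely \emph{lower semicontinuous} so any subsequential limit $(Q,M)$ satisfies $M\ge\max Q$ a.s., and the identification of marginals, which forces $M=\max Q$ a.s.\ and hence joint convergence. With that step closed, your proposal is a correct, self-contained proof of the cited result.
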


\begin{proof}[Proof of Theorem~\ref{thm:gauss-comp-ppp}]
By Lemma~\ref{lem:pp-equivalence} and~\eqref{eq.ppp}, it suffices to prove that
\[
\Big|
\E e^{-\langle \mathcal{P}_{N}(1),\varphi\rangle}
-
\E e^{-\langle \mathcal{P}_{N}(0),\varphi\rangle}
\Big|
\longrightarrow 0
\quad\text{for all }\varphi\in\Clbrc.
\]
Fix $\varphi\in\Clbrc$ and set $S=\operatorname{supp}(\varphi')$.
For $x=(x_1,\dots,x_N)\in\R^N$, define
\[
F_N(x):=\exp\Big(-\sum_{k=1}^N \varphi\Big(\frac{x_k-a_N}{b_N}\Big)\Big),
\quad\text{so that}\quad
e^{-\langle \mathcal{P}_N(h),\varphi\rangle}=F_N(X_N(h)).
\]

Let $\sigma_{iN}^2=\operatorname{Var}(X_{iN}(0))=\operatorname{Var}(X_{iN}(1))$ and put
$Z_{iN}(h)=X_{iN}(h)/\sigma_{iN}$.  
Then $Z_N(0)$ and $Z_N(1)$ are independent centred Gaussian vectors
with unit variance, hence all $Z_N(h)=\sqrt{1-h}\,Z_N(0)+\sqrt{h}\,Z_N(1)$ have unit variance. Defining
\[
\widetilde F_N(z)
:=F_N(\sigma_{1N}z_1,\dots,\sigma_{NN}z_N)
=\exp\Big(-\sum_{k=1}^N 
\varphi\Big(\frac{\sigma_{kN}z_k-a_N}{b_N}\Big)\Big),
\]
we have $F_N(X_N(h))=\widetilde F_N(Z_N(h))$.  
Applying Lemma~\ref{lem:gauss-interp} to $(Z_N(h),\widetilde F_N)$ gives
\[
\E[\widetilde F_N(Z_N(1))-\widetilde F_N(Z_N(0))]
=\frac12\sum_{i\ne j}(\tilde\Lambda_{ij}(N,1)-\tilde\Lambda_{ij}(N,0))
\int_0^1\E[\partial_{ij}^2\widetilde F_N(Z_N(h))]\,\mathrm{d}h,
\]
with $\tilde\Lambda_{ij}(N,h)=\Sigma_{ij}(N,h)/(\sigma_{iN}\sigma_{jN})$.
Since for $i\ne j$
\[
\partial_{ij}^2\widetilde F_N(z)
=\frac{\sigma_{iN}\sigma_{jN}}{b_N^2}\,
\varphi'\Big(\frac{\sigma_{iN}z_i-a_N}{b_N}\Big)
\varphi'\Big(\frac{\sigma_{jN}z_j-a_N}{b_N}\Big)
\widetilde F_N(z),
\]
the factors $\sigma_{iN}\sigma_{jN}$ cancel, and

\begin{align*}
&\Big|\E[F_N(X_N(1))-F_N(X_N(0))]\Big|\\
&\le\frac{\|\varphi'\|_\infty^2}{2b_N^2}
\int_0^1\sum_{i\ne j}
|\Sigma_{ij}(N,1)-\Sigma_{ij}(N,0)|
\P\Big(\tfrac{X_{iN}(h)-a_N}{b_N}\in S,\tfrac{X_{jN}(h)-a_N}{b_N}\in S\Big)\,\mathrm{d}h.
\end{align*}
By~\eqref{eq:tozero}, the right-hand side tends to $0$ as $N\to\infty$,
yielding the claim.
\end{proof}

The following simple analytical lemma provides an upper bound for the probability in \eqref{eq:tozero}, which we use to apply Theorem~\ref{thm:gauss-comp-ppp}.
\begin{lemma}\label{lem:doublegauss}
	Let $(X,Y)$ be a centered bivariate Gaussian vector with unit variances and correlation $\rho \in [0,1)$. 
	Let $S \subset \mathbb{R}$ be a bounded open set with Lebesgue measure $|S| < \infty$. 
    For $u > 0$, define
	\[
	d(u,S) := \inf_{s \in S} |u + s| = \operatorname{dist}(-u, S).
	\]
	Then
	\[
	\mathbb{P}\big(X - u \in S,\; Y - u \in S\big)
	\;\le\;
	\frac{|S|^{2}}{2\pi\sqrt{1-\rho^{2}}}\,
	\exp\!\Big(-\frac{d(u,S)^{2}}{1+\rho}\Big).
	\]
\end{lemma}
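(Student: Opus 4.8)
The plan is to bound the joint density pointwise by its supremum over the relevant region and then integrate. Writing $A := u + S$, translation invariance of Lebesgue measure gives $|A| = |S|$ and $\operatorname{dist}(0, A) = \operatorname{dist}(-u, S) = d(u,S)$, and the event in question is $\{X \in A,\ Y\in A\}$. Since $(X,Y)$ has the standard bivariate density
\[
f(x,y) = \frac{1}{2\pi\sqrt{1-\rho^2}}\exp\!\Big(-\tfrac{x^2 - 2\rho xy + y^2}{2(1-\rho^2)}\Big),
\]
we have $\P(X\in A,\, Y\in A) = \int_A\!\int_A f \le |A|^2 \sup_{(x,y)\in A\times A} f(x,y)$. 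The asserted bound will therefore follow once we show that the quadratic form $Q(x,y) := \frac{x^2 - 2\rho xy + y^2}{2(1-\rho^2)}$ in the exponent satisfies $Q(x,y) \ge \frac{d(u,S)^2}{1+\rho}$ for all $x,y \in A$.

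For the key step I would diagonalise $Q$ in the sum/difference coordinates $v := (x+y)/2$ and $w := (x-y)/2$. A short computation using $x^2+y^2 = 2v^2 + 2w^2$ and $xy = v^2 - w^2$ gives
\[
Q(x,y) = \frac{v^2}{1+\rho} + \frac{w^2}{1-\rho}.
\]
Since $\rho \in [0,1)$ we have $0 < 1-\rho \le 1+\rho$, hence $\frac{w^2}{1-\rho} \ge \frac{w^2}{1+\rho}$, and therefore $Q \ge \frac{v^2 + w^2}{1+\rho} = \frac{x^2+y^2}{2(1+\rho)}$. Finally, because $A$ lies at distance $d(u,S)$ from the origin, every $x, y \in A$ satisfy $|x|, |y| \ge d(u,S)$, so $x^2 + y^2 \ge 2\,d(u,S)^2$ and the claimed lower bound on $Q$ follows. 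Substituting this into the pointwise density bound and integrating over $A\times A$ yields exactly $\frac{|S|^2}{2\pi\sqrt{1-\rho^2}}\exp\!\big(-d(u,S)^2/(1+\rho)\big)$.

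The computation is elementary, so the only mild subtlety — and the point where a naive argument fails — is the uniform lower bound on the exponent: one cannot simply bound $|v| = |(x+y)/2|$ from below, since $x$ and $y$ may lie on opposite sides of the origin (when $-u$ falls within the span of $S$), making their average arbitrarily small. The sum/difference diagonalisation circumvents this, as it is the isotropic combination $v^2 + w^2 = (x^2+y^2)/2$, rather than $v$ alone, that is controlled by the distance $d(u,S)$; the monotonicity $\frac{1}{1-\rho}\ge\frac{1}{1+\rho}$ is precisely what allows passing from the anisotropic form $Q$ to this quantity while retaining the sharp constant $1+\rho$ in the exponent.
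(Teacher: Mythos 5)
Your proof is correct and takes essentially the same approach as the paper: both arguments bound the joint density by its supremum over $(u+S)\times(u+S)$, reduce to the quadratic-form lower bound $\frac{x^{2}-2\rho xy+y^{2}}{2(1-\rho^{2})}\ge\frac{x^{2}+y^{2}}{2(1+\rho)}$, and conclude via $|x|,|y|\ge d(u,S)$ together with the area factor $|S|^{2}$. Your sum/difference diagonalisation is only a repackaging of the paper's one-line inequality, since the term you discard, $\frac{w^{2}}{1-\rho}-\frac{w^{2}}{1+\rho}=\frac{\rho(x-y)^{2}}{2(1-\rho^{2})}$, is precisely the paper's observation that $x^{2}-2\rho xy+y^{2}-(1-\rho)(x^{2}+y^{2})=\rho(x-y)^{2}\ge0$.
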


\begin{proof}
We have
	\[
	\mathbb{P}(X-u\in S,\ Y-u\in S)
	= \iint_{(u+S)\times(u+S)} f(x,y)\,dx\,dy,
	\]	
    where 
	\[
	f(x,y)
	= \frac{1}{2\pi\sqrt{1-\rho^{2}}}
	\exp\!\left(-\frac{x^{2}-2\rho xy+y^{2}}{2(1-\rho^{2})}\right),
	\]
	ist the joint density of $X$ and $Y$. For $\rho \in [0,1)$, we have the elementary inequality
	\[
	x^{2}-2\rho xy+y^{2} \ge (1-\rho)(x^{2}+y^{2}),
	\]
	since the difference equals $\rho(x-y)^{2} \ge 0$. 
	For every $x \in u+S$, we have $|x| \ge d:=d(u,S)$, and the same holds for $y \in u+S$. Therefore,
\begin{equation}\label{eq:inequality}
	\inf_{(x,y)\in (u+S)\times(u+S)} 
	\frac{x^{2}-2\rho xy+y^{2}}{2(1-\rho^{2})}
	\ge
	\frac{d^{2}+d^{2}}{2(1+\rho)}
	= \frac{d^{2}}{1+\rho}.
\end{equation}
	Using \eqref{eq:inequality} to uniformly bound the density and oberving that the integration region has area $|S|^{2}$, the claim follows.
\end{proof}
\section{Extrema of the Gaussian Free field on the regular tree}
Our first result about the extremal behaviour of the GFF concerns the infinite $r$-regular tree $\T=\T_r$ for $r\geq 3$. Let $X=(X_k)_{k=0}^\infty$ denote simple random walk on $\T$ and let $\P_x$ and $\E_x$ denote its law and expectation given $X_0=x$. The asscociated Green function is then given by
\begin{equation}
	g(x,y) := {\E}_x\left[\sum_{k=0}^\infty \mathbbm{1}_{\{X_k = y\}}\right] = \frac{r-1}{r-2} \left(\frac{1}{r-1}\right)^{d_{\T}(x,y)} \quad \text{for } x, y \in \T,
\end{equation}
where $d_{\T}(\cdot,\cdot)$ denotes graph distance in $\T$. Let $\psi=\big(\psi(x), x\in \T\big)$ denote the GFF on $\T$, i.e.\ the centred Gaussian field with covariance
\[
\E[\psi(x)\psi(y)]=g(x,y),\quad x,y\in \T
\]
Throughout, we assume that $V_N\subset\T$ is a subtree with $N$ vertices.  Recall that $\sigma_r^2$
, $a_N$ and $b_N$ are defined in \eqref{eq:rescaling}.
Define the centred and rescaled field
\[
Z_x \;:=\; \frac{\psi(x)}{\sigma_r},
\qquad x\in V_N,
\]
so that $\operatorname{Var}(Z_x)=1$.  
Let
\[
\mathcal P_N \;:=\; \sum_{x\in V_N}\delta_{\frac{Z_x-a_N}{b_N}},
\qquad 
M_N \;:=\; \max_{x\in V_N}\frac{Z_x-a_N}{b_N}.
\]
Recall that  $\mathcal P_\infty$ is a Poisson point process on $\R$ with intensity $e^{-x}\,\mathrm{d}x$, and
$
M_\infty:=\max \mathcal P_\infty.
$

\begin{theorem}[Extremal point process and maximum on the $r$-regular tree]\label{thm:tree-ext-proc}
	Let $r\ge3$, 
	then, as $N\to\infty$,
	\[
	(\mathcal P_N,\,M_N)\ \Longrightarrow\ (\mathcal P_\infty,\,M_\infty).
	\]
	In particular, $M_N$ is asymptotically standard-Gumbel distributed.
\end{theorem}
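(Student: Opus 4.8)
The plan is to derive Theorem~\ref{thm:tree-ext-proc} directly from the Gaussian comparison principle of Theorem~\ref{thm:gauss-comp-ppp}, comparing the rescaled GFF field against an i.i.d.\ standard Gaussian field. Concretely, I would take $X_N(1)=(Z_x)_{x\in V_N}$, the unit-variance field whose off-diagonal covariances are $\Sigma_{ij}(N,1)=\operatorname{Cov}(Z_{x_i},Z_{x_j})=g(x_i,x_j)/\sigma_r^2=(r-1)^{-d_{\T}(x_i,x_j)}$, and take $X_N(0)$ to be a vector of $N$ i.i.d.\ standard Gaussians, so that $\Sigma(N,0)$ is the identity and $\Sigma_{ij}(N,0)=0$ for $i\ne j$. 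Both fields have unit variance, as required by the reduction in the proof of Theorem~\ref{thm:gauss-comp-ppp}. Condition~\eqref{eq.ppp} is then exactly the classical statement that the extremal process of $N$ i.i.d.\ standard Gaussians, centred by $a_N$ and scaled by $b_N$, converges to $(\mathcal P_\infty,\max\mathcal P_\infty)$, which I would quote from the classical extreme value literature. Everything then reduces to verifying the covariance condition~\eqref{eq:tozero}.

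For~\eqref{eq:tozero}, the interpolated field $X_N(h)$ has unit variance and correlations $\rho_{ij}(h)=h(r-1)^{-d_{\T}(x_i,x_j)}\in[0,(r-1)^{-1}]$, which are crucially bounded away from $1$ because $d_{\T}(x_i,x_j)\ge1$ for $i\ne j$. I would bound the pairwise probability using Lemma~\ref{lem:doublegauss}, applied with the bounded open set $b_NS$ and shift $u=a_N>0$: this produces a factor $|b_NS|^2=b_N^2|S|^2$ that cancels the prefactor $b_N^{-2}$ in~\eqref{eq:tozero}, a factor $(1-\rho_{ij}(h)^2)^{-1/2}\le(1-(r-1)^{-2})^{-1/2}$ that is uniformly bounded, and the Gaussian factor $\exp\bigl(-d(a_N,b_NS)^2/(1+\rho_{ij}(h))\bigr)$. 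Since $d(a_N,b_NS)\ge a_N-b_NK$ with $K=\sup_{s\in S}|s|$ and $a_Nb_N=1$, one has $d(a_N,b_NS)^2\ge a_N^2-2K$, so up to the constant $e^{2K}$ the Gaussian factor is at most $\exp(-a_N^2c_d)$ with $c_d:=(1+(r-1)^{-d})^{-1}$, which increases in $d$. The key arithmetic input is $a_N^2=2\log N-\log\log N-\log(4\pi)+o(1)$.

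It then remains to show that $b_N^{-2}$ times the integral in~\eqref{eq:tozero} tends to $0$, which after the above reductions amounts to
\[
N\sum_{d\ge1}\min\!\big(r(r-1)^{d-1},\,N\big)\,(r-1)^{-d}\,\exp(-a_N^2c_d)\ \longrightarrow\ 0,
\]
where I used that the number of vertices of $V_N\subset\T$ at distance $d$ from a fixed vertex is at most $\min(r(r-1)^{d-1},N)$. I would split the sum at the threshold $d_N:=\lfloor\log N/\log(r-1)\rfloor$. For $d\le d_N$ the weighted count equals $\tfrac{r}{r-1}N$ per distance, and since the terms decrease in $d$ the partial sum is at most $d_N\cdot\tfrac{r}{r-1}N\exp(-a_N^2c_1)$; using $c_1=(r-1)/r$ and $a_N^2\sim2\log N$ this is of order $(\log N)\,N^{(2-r)/r}$, which vanishes precisely because $r\ge3$ forces $(2-r)/r<0$. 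For $d>d_N$ the trivial count bound $N^2$ together with the geometric factor $(r-1)^{-d}$ gives, after summing the geometric series and using $c_{d_N}\approx1$, a bound of order $N\exp(-a_N^2)=N^{-1}\cdot(\log N)\cdot\text{const}\to0$. Combining the two regimes verifies~\eqref{eq:tozero}, and Theorem~\ref{thm:gauss-comp-ppp} then yields the claim.

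The main obstacle is the verification of~\eqref{eq:tozero}, and within it the control over the full range of distances: the naive nearest-neighbour pair count $Nr(r-1)^{d-1}$ badly overcounts once $d$ exceeds $\log N/\log(r-1)$, so the two-regime split is genuinely needed, with the small-$d$ part governed by the nearest-neighbour correlation $(r-1)^{-1}$ and the large-$d$ part controlled by the geometric decay of the covariances. The borderline case is $r=3$, where the small-$d$ contribution decays only like $N^{-1/3}$; there the value $\sigma_r^2=(r-1)/(r-2)$ and the precise form of $a_N$ both matter, and this is exactly where the hypothesis $r\ge3$ enters.
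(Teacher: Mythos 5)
Your proposal is correct and follows essentially the same route as the paper's proof: Gaussian comparison via Theorem~\ref{thm:gauss-comp-ppp} between the rescaled GFF and an i.i.d.\ standard Gaussian field (with \eqref{eq.ppp} quoted from classical extreme value theory), the pairwise bound of Lemma~\ref{lem:doublegauss} applied to the rescaled window $a_N+b_NS$ so that the $b_N^2$ factors cancel, and grouping of pairs by tree distance with the count $N\min\{r(r-1)^{k-1},N\}$ split at $\log_{r-1}N$, leading to the same decisive exponent $N^{(2-r)/r}$ up to logarithmic factors. Your handling of the shifted window, $d(a_N,b_NS)^2\ge a_N^2-2K$, is in fact slightly more explicit than the paper's, which silently absorbs this $e^{O(1)}$ correction into the bound $\exp\bigl(-a_N^2/(1+\rho)\bigr)$.
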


\begin{proof}
	We wish to apply Theorem~\ref{thm:gauss-comp-ppp}.  
	Recall that
	\[
	\rho_{xy}:=\E[Z_xZ_y]
	=\left(\frac{1}{r-1}\right)^{d_\T(x,y)}\in[0,1)
	\qquad \text{ for all }x,y\in V_N \text{ with }x\neq y.
	\]
Note that $\rho_1 = 1/(r-1) < 1$ if and only if $r \ge 3$. 
	Let $(X_x(0))_{x\in V_N}$ be i.i.d.\ standard Gaussian variables, independent of $\psi$, and define
$	
	X_x(1):=Z_x,\; x\in V_N.
$
	For $h\in[0,1]$ set
	\[
	X_x(h):=\sqrt{1-h}\,X_x(0)+\sqrt{h}\,X_x(1),\qquad
	X_N(h):=(X_x(h))_{x\in V_N}\in\R^N .
	\]
	Let $(a_N,b_N)$ be as in \eqref{eq:rescaling}.
By classical extreme value theory, see e.g.\ \cite[Thm.\ 5.5.2]{bovier2005extreme}, we have
\[
\Bigg(\sum_{x\in V_N} \delta_{(X_x(0)-a_N)/b_N},\ 
\max_{x\in V_N}\frac{X_x(0)-a_N}{b_N}\Bigg)
\Longrightarrow (\mathcal P_\infty,\max \mathcal P_\infty),
\]
and consequently condition~\eqref{eq.ppp} is satisfied.
It remains to show that \eqref{eq:tozero} holds, i.e.\ that for any fixed but arbitrary bounded open set $S\subset\R$ with $|S|<\infty$
\[
T_N
:= 
\frac{1}{b_N^{2}}\int_0^1\sum_{x\ne y}
\big|\Sigma_{xy}(N,1)-\Sigma_{xy}(N,0)\big|\,
\P\Big(\tfrac{X_{x}(h)-a_N}{b_N}\in S,\,
\tfrac{X_{y}(h)-a_N}{b_N}\in S\Big)\, \mathrm{d}h
\]
vanishes as $N\to\infty$. To this end, we first observe that, for $x\ne y$ and $h\in[0,1]$, 
$(X_{x}(h),X_{y}(h))$ is a centred Gaussian vector with unit variances
and correlation $\rho_{xy}(h)=h\rho_{xy}\in[0,1)$. We next apply Lemma~\ref{lem:doublegauss}, first noting that $(X-u)/b \in S$ is equivalent to $X \in u + bS$, and that 
	$|bS| = b|S|$ while $\operatorname{dist}(-u,bS) = b\,\operatorname{dist}(-u/b,S)$, to obtain
	\begin{equation*}
		\P\Big(\tfrac{X_{x}(h)-a_N}{b_N}\in S,\,
		\tfrac{X_{y}(h)-a_N}{b_N}\in S\Big)
		\ \le\ 
		\frac{b_N^{2}|S|^{2}}{2\pi\sqrt{1-\rho_{xy}(h)^2}}\,
		\exp\Big(-\frac{a_N^2}{1+\rho_{xy}(h)}\Big),
	\end{equation*}
	and thus
	\[T_N
	\le 
	\frac{|S|^{2}}{2\pi}\int_0^1\sum_{x\ne y}
	\big|\Sigma_{xy}(N,1)-\Sigma_{xy}(N,0)\big|\, \frac{1}{\sqrt{1-\rho_{xy}(h)^2}}\,
	\exp\Big(-\frac{a_N^2}{1+\rho_{xy}(h)}\Big)\, \mathrm{d}h.
	\]
	
	Observe that $\rho_{xy}(h)=h\rho_{xy}\le\rho_{xy}$ and that $H(\rho):=(1-\rho^2)^{-1/2}e^{-a_N^2/(1+\rho)}$ is strictly increasing, which leads to
	
	\begin{equation} \label{eq:T_Nbound}
		T_N
		\ \le\ 
		\frac{|S|^2}{2\pi}\sum_{x\ne y}\rho_{xy}\,H(\rho_{xy})
		=
		\frac{|S|^2}{2\pi}\sum_{x\ne y}\rho_{xy}\,
		\frac{1}{\sqrt{1-\rho_{xy}^2}}\,
		\exp\Big(-\frac{a_N^2}{1+\rho_{xy}}\Big).
	\end{equation}
	Grouping unordered pairs of vertices by mutual tree distance, we set
	\[
	N_k := \big\{\{x,y\}\subset V_N : d_\T(x,y)=k\big\}
	\quad \text{and}\quad \rho_k=(r-1)^{-k}.
	\]
	Then, using monotonicity of $H(\cdot)$ again,
	\begin{equation}\label{eq:sumbound592}
		\sum_{x\ne y}\rho_{xy}\,H(\rho_{xy}) = \sum_{k\ge1}\sum_{\{x,y\}\in N_k} \rho_{k}\,H(\rho_{k})=\sum_{k\ge1}|N_k|\rho_{k}\,H(\rho_{k})\leq H(\rho_1)\sum_{k\ge1}|N_k|\rho_{k}.
	\end{equation}
	For each $x\in V_N$, the sphere of radius $k$ contains at most $r(r-1)^{k-1}$ vertices, but never more than~$N$,
	which implies
	\[
	\big|\{y\in V_N : d_\T(x,y)=k\}\big| \le \min\big\{r(r-1)^{k-1},\,N\big\}.
	\]
	Summing over $x$ gives
	\begin{equation}\label{eq:N_kbound}
		|N_k| \le C\,N\,\min\{r(r-1)^{k-1},\,N\},
	\end{equation}
	for some constant $C<\infty$. Inserting \eqref{eq:N_kbound} into \eqref{eq:sumbound592}, combining the resulting bound with \eqref{eq:T_Nbound} and using that $\rho_k=(r-1)^{-k}$ yields
	\begin{align*}
		T_N
		& \le \frac{CN|S|^2 H(\rho_{1})}{2\pi}\sum_{k\geq 1}\rho_k\min\big\{r(r-1)^{k-1},\,N\big\}\\
        & = \frac{CN|S|^2 H(\rho_{1})}{2\pi}\sum_{k\geq 1}\min\big\{r(r-1)^{-1},\,N(r-1)^{-k}\big\}\\
		& = \frac{CN|S|^2 H(\rho_{1})}{2\pi}\left( \sum_{k\leq \log_{r-1}N}\frac r{r-1} + N\sum_{k > \log_{r-1}N}(r-1)^{-k}\right)\\
		& \leq \tilde C\,H(\rho_1)\,  N\log N,
	\end{align*}
	for some $\tilde C<\infty$ depending only on $S$. Using $a_N^2=2\log N+O(\log\log N)$ we get
\[
T_N 
\le N^{\,1-\frac{2}{1+\rho_1}}(\log N)^{O(1)}
= N^{\,\frac{2-r}{r}}(\log N)^{O(1)}.
\]
Since $\frac{2-r}{r}<0$ if $r\ge3$ we have $T_N=o(1)$ and Theorem~\ref{thm:gauss-comp-ppp} proves the claim. 
\end{proof}

\section{Extrema of the Gaussian Free field on the random regular graphs}
\subsection{The zero-average Green function on $\G_N$}
Approximations for the Green function on random $r$-regular graphs are rather well-studied, 
see e.g.\ \cite{Bauer19,AbacherliCerny2,Conchon23}. In this section, we prove that, for most vertices, the the Green function can be computed precisely. We collect a few necessary results in the following proposition, where we use the notation \( B_k(x, \G_N) \) for the $k$-neighbourhood of vertex $x$ in $\G_N$.
\begin{prop}\label{prop:collection}
	There exist $k_1,k_2,k_3>0$, and $K_1<\infty$ such that the following properties are satisfied for the random graphs $\G_N$ with high probability under $\mathbf{P}$ as $N\to\infty$:
	\begin{enumerate}[(i)]
		\item $\G_N$ is a $k_1$-expander, and in particular connected.
		\item $ B_{k_1\log N}(x,\G_N)$ contains at most one cycle for all $x\in[N]$.
		\item The spectral gap of $\G_N$ is at least $k_2.$
		\item For all $x,y\in[N]$ we have $$G_{\G_N}(x,y)\leq K_1 (r-1)^{-d_{\G_N}(x,y)}\vee N^{-k_3}.$$
	\end{enumerate}
\end{prop}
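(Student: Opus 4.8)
The plan is to treat the four items separately, since (i)--(iii) are essentially classical structural facts about random $r$-regular graphs, whereas (iv) is the genuinely new estimate and absorbs most of the work. The constant $k_1$ plays a double role (expansion constant in (i) and ball radius in (ii)), so I would fix it \emph{small}: being a $k_1$-expander for small $k_1$ is a weaker statement implied by any stronger expansion bound, while (ii) forces $k_1$ down, and the same small $k_1$ must later be compatible with the time scales in (iv). The order of choice will be: $k_1,k_2$ first, then $k_3$ last.

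For (iii) I would invoke the standard spectral theory of random regular graphs: for fixed $r\ge 3$, the second-largest eigenvalue in modulus of the transition matrix of $\G_N$ is with high probability bounded away from $1$ (Friedman's theorem, though any earlier non-sharp bound suffices), giving a uniform lower bound $k_2>0$ on the spectral gap of the generator. The discrete Cheeger inequality then turns this gap into edge expansion $\ge k_2/2$, so taking $k_1\le k_2/2$ yields (i) and in particular connectedness. For (ii) I would run a breadth-first exploration in the configuration model from each vertex $x$: the ball $B_{k_1\log N}(x,\G_N)$ fails to be unicyclic precisely when the exploration produces at least two half-edges pairing back into the already-explored set, each such coincidence costing a factor of order (ball size)$/N$. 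Thus the expected number of vertices whose $k_1\log N$-ball contains two or more cycles is of order $N\cdot(r-1)^{4k_1\log N}/N^2=N^{4k_1\log(r-1)-1}$, which is $o(1)$ once $k_1<\tfrac1{4\log(r-1)}$; a union bound gives (ii).

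The core is (iv). Writing $p_t(x,y):=\P_x^{\G_N}(\overline X_t=y)$ and using $1/N>0$, I would split the defining integral at a cut-off $T\asymp\log N$,
\[
G_{\G_N}(x,y)\ \le\ \int_0^T p_t(x,y)\,\di t\ +\ \int_T^\infty \big|p_t(x,y)-\tfrac1N\big|\,\di t .
\]
The tail is handled by the spectral gap: expanding $p_t-1/N$ in the eigenbasis of the generator and applying Cauchy--Schwarz yields $|p_t(x,y)-1/N|\le e^{-k_2 t}$, so the second integral is at most $k_2^{-1}e^{-k_2 T}$, i.e.\ of order $N^{-k_3}$ as soon as $T\ge (k_3/k_2)\log N$. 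For the head I would compare the walk on $\G_N$ with the walk on $\T$: by (ii), with $R:=k_1\log N$ the ball $B_R(x)$ is a tree plus at most one edge, so until the first traversal of the cycle-closing edge the $\G_N$-trajectory lifts to a $\T$-trajectory, and visits to $y$ correspond to visits to its (at most two) tree-preimages. Hence the occupation time of $y$ before exiting $B_R(x)$ is dominated, up to a factor $2$, by the tree occupation time, which is bounded by $g(x,y)=\sigma_r^2(r-1)^{-d_{\G_N}(x,y)}$. The remaining contribution, from the walk leaving $B_R(x)$ before time $T$, is at most $T\cdot\P_x(\tau_R\le T)$; since the tree walk is ballistic with positive speed, a large-deviation estimate bounds $\P_x(\tau_R\le T)$ by $N^{-ck_1}$ provided $T$ is a small enough multiple of $R$. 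Collecting terms gives $G_{\G_N}(x,y)\le K_1\big((r-1)^{-d_{\G_N}(x,y)}\vee N^{-k_3}\big)$ after adjusting constants.

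The main obstacle is the reconciliation of the two competing time scales in (iv): the tail bound needs $T\gtrsim(\log N)/k_2$ so the walk has mixed, while the tree comparison needs $T\lesssim k_1\log N$ so the walk has not yet escaped the tree-like ball. These are simultaneously satisfiable only because $k_3$ is free, so I would choose $k_3$ as a sufficiently small multiple of $k_1k_2$. The real technical effort then lies in the covering-space lift within the \emph{unicyclic} (rather than genuinely tree-like) ball --- the careful bookkeeping of the single cycle and the factor $2$ from the two preimages --- together with the large-deviation control of the exit time $\tau_R$ that turns the escape contribution into an acceptable $N^{-k_3}$ term.
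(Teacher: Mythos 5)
Your proposal is correct in outline, but it takes a genuinely different route from the paper, which disposes of this proposition in three lines of citations: items (i) and (ii) are quoted from \cite[Prop.~2.1]{Conchon23}, item (iii) is attributed to \cite{CerTexWin11}, and item (iv) is deduced from the upper bound in \cite[Prop.~2.2]{AbacherliCerny2}. You instead reconstruct the underlying arguments from scratch, and your reconstructions are faithful to how these facts are actually proved: Friedman-type eigenvalue bounds plus Cheeger for (iii) and (i); a first-moment exploration in the configuration model for (ii), which is the same device the paper itself uses in Lemma~\ref{lem:badlem} for counting bad vertices; and for (iv), your split of the Green function integral at $T\asymp\log N$, with the spectral-gap bound $|p_t(x,y)-1/N|\le e^{-k_2 t}$ for the tail and a tree comparison plus a Poisson jump-count large-deviation estimate for the head, is precisely the mechanism the paper deploys later in the proof of Lemma~\ref{lem:goodgreenbound} (via Lemma~\ref{lem:Saloff}), and is close in spirit to the proof in \cite{AbacherliCerny2}. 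Your identification of the competing time scales ($T\gtrsim(\log N)/k_2$ for mixing versus $T\lesssim k_1\log N$ for confinement, reconciled by taking $k_3$ a small multiple of $k_1k_2$) is exactly the right reconciliation. What your approach buys is self-containedness and explicit constant-tracking; what the paper's citation buys is brevity and offloading the delicate uniform-in-$x,y$ bookkeeping to references where it is done carefully.

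One imprecision worth fixing: the claim that visits to $y$ correspond to visits to ``its (at most two) tree-preimages'' is not literally correct. In the universal cover of a unicyclic ball, $y$ has \emph{infinitely} many preimages, one per winding number around the unique cycle of length $L\ge3$, at distances of the form $a+b+|jL+c|$ from the lift of $x$; at most two of these realize the minimal distance $d_{\G_N}(x,y)$, and the rest contribute a geometric series
\[
\sum_{j\in\Z}(r-1)^{-d_j}\;\le\;\frac{2}{1-(r-1)^{-L}}\,(r-1)^{-d_{\G_N}(x,y)},
\]
so the constant-factor conclusion stands. Since you explicitly flagged this covering-space bookkeeping as the main technical effort, this is a gap in precision rather than in strategy, and the rest of your sketch (including the crude bound $T\,\P_x(\tau_R\le T)$ for the escape contribution, controlled by the Poisson number of jumps rather than any ballisticity of the tree walk, which is in fact unnecessary there) goes through.
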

\begin{proof}
	Items (i) and (ii) are taken from \cite[Prop.\ 2.1]{Conchon23}. Item (iii) is discussed in \cite{CerTexWin11} and item (iv) is an immediate consequence of the upper bound given in \cite[Prop.\ 2.2]{AbacherliCerny2}.
\end{proof}

We say that a vertex \( x \in [N] \) is \emph{\( \ell \)-good} if its \( \ell \)-neighbourhood \( B_\ell(x; \G_N) \) in \( \G_N \) is isomorphic (as a rooted graph) to the \( \ell \)-neighbourhood \( B_\ell(o; \T) \) of the root in the infinite $r$-regular tree. That is, \( x \) has no cycles within distance \( \ell \), and therefore the local structure around \( x \) is tree-like up to depth \( \ell \). We further call a vertex \( x \in [N] \) \emph{\( \ell \)-bad} if it is not \( \ell \)-good. Let
\[
\mathrm{bad}_\ell^N := \left\{ x \in [N] : x \text{ is } \ell\text{-bad} \right\}
\]
denote the set of all \( \ell \)-bad vertices in \( \G_N \).
\begin{lemma}\label{lem:badlem}
	Let $\ell=\ell(N)<\log_{r-1}(5N)$. There exists some constant $K_2<\infty$ such that for the set $\mathrm{bad}_\ell^N$ of $\ell$-bad vertices and any $z$ the following is true:
	\begin{equation}\label{eq:badbound}
		\mathbf{P}(|\mathrm{bad}_\ell^N|\geq r^2(r-1)^{2\ell-2} z)\leq \frac{K_2}{z}.    
	\end{equation}
	
\end{lemma}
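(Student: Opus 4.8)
The plan is to control the first moment $\mathbf{E}\bigl[\,|\mathrm{bad}_\ell^N|\,\bigr]$ and then invoke Markov's inequality. Since the law $\mathbf{P}$ of the rooted graph is invariant under relabelling of the vertices, all vertices are exchangeable, so
\[
\mathbf{E}\bigl[\,|\mathrm{bad}_\ell^N|\,\bigr]=N\,\mathbf{P}\bigl(o_N\text{ is }\ell\text{-bad}\bigr).
\]
Everything then reduces to the single-vertex estimate
\begin{equation}\label{eq:plan-single}
\mathbf{P}\bigl(o_N\text{ is }\ell\text{-bad}\bigr)\ \le\ C\,\frac{r^2(r-1)^{2\ell-2}}{N}
\end{equation}
for a constant $C=C(r)<\infty$; granting \eqref{eq:plan-single}, we obtain $\mathbf{E}[\,|\mathrm{bad}_\ell^N|\,]\le C\,r^2(r-1)^{2\ell-2}$, and Markov's inequality applied at the threshold $r^2(r-1)^{2\ell-2}z$ yields $\mathbf{P}(|\mathrm{bad}_\ell^N|\ge r^2(r-1)^{2\ell-2}z)\le C/z$, which is the claim with $K_2=C$.

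To establish \eqref{eq:plan-single} I would explore the neighbourhood of $o_N$ in the configuration (pairing) model, using that the uniform simple $r$-regular graph is obtained by conditioning the configuration model on simplicity, an event of probability bounded below uniformly in $N$; this only inflates every probability by a bounded factor, so it suffices to bound the corresponding event for the pairing model. Run breadth-first search from $o_N$, revealing the matching partner of one half-edge at a time. As long as no collision has occurred the explored region is a tree, and $o_N$ is $\ell$-bad precisely when some revealed half-edge is matched to a half-edge incident to an already-discovered vertex before depth $\ell$ is completed. When $m$ vertices have been discovered, at most $rm$ half-edges are incident to them, while at least $rN/2$ half-edges remain unmatched throughout (provided the explored tree stays much smaller than $N$), so each revelation produces a collision with conditional probability at most $2m/N$. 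Since a tree of depth $\ell$ has $m\le |B_\ell(o;\T)|\le \tfrac{r}{r-2}(r-1)^\ell$ vertices and at most that many edges are revealed, a union bound over the (deterministically bounded) number of revelation steps gives $\mathbf{P}(o_N\text{ is }\ell\text{-bad})\le C(r-1)^{2\ell}/N$, which is \eqref{eq:plan-single} via the identity $(r-1)^{2\ell}=(r-1)^2(r-1)^{2\ell-2}\le r^2(r-1)^{2\ell-2}$.

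Two points require care and constitute the main obstacle. First, the bound ``at least $rN/2$ unmatched half-edges'' is only valid when the explored tree is small compared to $N$; this is where the hypothesis $\ell<\log_{r-1}(5N)$ enters. I would therefore split into the regime $(r-1)^\ell\le\sqrt N$, where the exploration bound applies verbatim, and the complementary regime $(r-1)^\ell>\sqrt N$, where $(r-1)^{2\ell}/N>1$ forces \eqref{eq:plan-single} to hold trivially since a probability never exceeds $1$ and $r^2(r-1)^{2\ell-2}\ge (r-1)^{2\ell}$. Second, one must check that conditioning on simplicity and the sequential dependence introduced by the revelations do not spoil the per-step bound $2m/N$; this is routine for the pairing model, since the partner of each revealed half-edge is uniform over the remaining unmatched half-edges, so the union bound over a bounded number of steps is legitimate and the simplicity conditioning costs only a constant factor. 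Assembling the two regimes gives \eqref{eq:plan-single}, and hence the lemma.
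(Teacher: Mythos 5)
Your proposal is correct and follows essentially the same route as the paper: a breadth-first exploration of the configuration model with a per-step collision probability bound, transfer to the uniform simple graph via the uniformly non-vanishing simplicity probability, exchangeability to get $\mathbf{E}\bigl[|\mathrm{bad}_\ell^N|\bigr]\le K\,r^2(r-1)^{2\ell-2}$, and a first-moment (Markov) bound, which is what the paper invokes under the name Chebyshev. Your explicit two-regime split in $\ell$ is only a minor refinement of the step where the paper uses the hypothesis $\ell<\log_{r-1}(5N)$ to control the denominator $2rN-2t$; it even makes that hypothesis essentially superfluous, since the claimed bound is trivial once $(r-1)^{2\ell}\ge N$.
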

\begin{proof}
	The proof is similar in vein to that of \cite[Lemma 4.2]{vdH1}, here are the details: The graph $\G_N$ has the same distribution as the \emph{configuration model} $\M_N$ derived from the degree sequence $(r,\dots,r)$ of length $N$ conditioned on being loop-free and simple. $\M_N$ can be constructed in the following way: to every $x\in [N]$ attach $r$ formal objects $h_1(x),\dots,h_r(x)$ called \emph{half-edges}. Now pick a uniform matching on the set $\{h_i(x),1\leq i\leq r, 1\leq x \leq N\}$ and identify a matched pair of half-edges with an edge (note that self-loops are allowed). The resulting collection of edges can be identified with a multigraph $\M_N$ on the vertex set $[N]$. It is well-known, see e.g.\ \cite{vdH1}, that the probability that $\M_N$ is loop-free and simple is non-vashing as $N\to\infty$, implying that results which hold with high probability for $\M_N$ also occur with high probability for $\G_N$. We explore the neighbourhood of vertex $1$ in $\G_N$ sequentially by revealing half-edges during a breadth-first search. More precisely, set $\mathrm{exp}_0=\{1\}$ and $\mathrm{act}_0=\{h_1(1),\dots,h_r(1)\}$ to initialise the exploration. The exploration process\footnote{Note that the matching generating $\M_N$ can also be \emph{constructed} sequentially starting with the half-edges incident to $1$ and the resulting process has the same distribution as the exploration described here.} now updates the explored vertices $\mathrm{exp}_t$ and the active half-edges $\mathrm{act}_t$ iteratively:
	\begin{itemize}
		\item Let $x_{t+1}$ denote the vertex closest to $1$ in $\G_N$ that can be reached by a path using only vertices in $\mathrm{exp}_t$ and has a half-edge in $\mathrm{act}_t$ (if there are several such vertices we use the one with smallest index) and then let $H_{t+1}$ denote this half-edge (again, if there are more than one, we choose the one with the smallest index).
		\item Reveal the half-edge $H'_{t+1}$ that $H_{t+1}$ has been matched to.
		\item Add the endpoint $y_{t+1}$ of $H'_{t+1}$ to $ \exp_{t}$ to get $\exp_{t+1}$, then remove $H_{t+1},H'_{t+1}$ from $\mathrm{act}_t$ and additionally add all unrevealed half-edges of $y_t$ to $\mathrm{act}_t$ to obtain $\mathrm{act}_{t+1}$.
	\end{itemize}

	We say that a \emph{collision} occurs in exploration step $t$, if $H'_t\in \mathrm{act}_t$. Let $\tau_{\mathrm{coll}}$ denote the first exploration step with a collision. The subgraph of $\M_N$ consisting of explored vertices and revealed edges during exploration steps $1,\dots,\tau_{\mathrm{coll}}-1$ is a tree. Using the assumption on the upper bound on $\ell$ and the fact that $|\mathrm{act}_t|\leq r+(r-1)t$ for all $t$, it follows that
	\begin{align*}
		\mathbf{P}(1\in \mathrm{bad}_\ell^N) & = \mathbf{P}(\tau_{\mathrm{coll}}\leq r(r-1)^{\ell-1})\leq \sum_{t=1}^{r(r-1)^{\ell-1}}\frac{r+(r-1)t}{2rN-2t}\\
		&\leq \frac{2}{N}\sum_{t=1}^{r(r-1)^{\ell-1}}{r+(r-1)t}\leq \frac{1}{N}\,K r^2(r-1)^{2\ell-2},
	\end{align*}
	for some $K<\infty.$ By symmetry, we find that $\mathbf{E}[|\mathrm{bad}_\ell^N|]\leq K r^2(r-1)^{2\ell-2}$ and thus \eqref{eq:badbound} follows by Chebyshev's inequality.
\end{proof}

Next, we recall that the continuous time simple random walk $\overline{X}$ on a graph $G$ mixes exponentially fast if $G$ has a positive spectral gap.
\begin{lemma}[{\cite[Corollary\ 2.1.5.]{Saloff97}}]\label{lem:Saloff} Let $G$ be a connected $r$-regular graph of size $N$ with spectral gap $\kappa>0$, then for any vertices $x,y\in G$
	\[
	\Big|\P_x^G(\overline{X}_t=y)-\frac{1}{N}\Big|\leq \textup{e}^{-\kappa\, t} \quad\text{ for all }t>0.
	\]
	
\end{lemma}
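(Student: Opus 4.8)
The plan is to run the standard spectral argument for reversible continuous-time Markov chains, reading the exponential decay off the spectral gap directly. Since $G$ is $r$-regular, the one-step transition matrix $P=(P_{xy})$ of simple random walk, $P_{xy}=\frac1r\1_{\{x\sim y\}}$, is symmetric with respect to the standard inner product on $\R^N$ (the uniform measure is stationary and reversible). Hence $P$ admits an orthonormal eigenbasis $\phi_1,\dots,\phi_N$ with real eigenvalues $1=\lambda_1>\lambda_2\ge\cdots\ge\lambda_N\ge-1$, where $\phi_1\equiv N^{-1/2}$ is constant; connectedness makes $\lambda_1=1$ simple, and the strict inequality $\lambda_1>\lambda_2$ is exactly the positivity of the spectral gap, which I will take to be $\kappa:=1-\lambda_2$.

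The continuous-time walk $\overline X$ is the constant-speed walk with generator $L=P-I$, so its transition semigroup is $e^{tL}$, sharing the eigenbasis $(\phi_i)$ with eigenvalues $e^{t(\lambda_i-1)}$. Expanding the kernel in this basis gives
\[
\P_x^G(\overline X_t=y)=\big(e^{tL}\big)_{xy}=\sum_{i=1}^N e^{t(\lambda_i-1)}\phi_i(x)\phi_i(y),
\]
and the $i=1$ term equals $\phi_1(x)\phi_1(y)=1/N$, precisely the stationary value subtracted in the statement. Thus
\[
\P_x^G(\overline X_t=y)-\frac1N=\sum_{i=2}^N e^{t(\lambda_i-1)}\phi_i(x)\phi_i(y),
\]
and it remains to bound the right-hand side by $e^{-\kappa t}$.

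For this I would proceed in two steps. First, for every $i\ge2$ we have $\lambda_i-1\le\lambda_2-1=-\kappa$, so $e^{t(\lambda_i-1)}\le e^{-\kappa t}$ for all $t>0$; pulling this factor out leaves $\sum_{i\ge2}|\phi_i(x)||\phi_i(y)|$. Second, Cauchy--Schwarz together with Parseval's identity $\sum_{i=1}^N\phi_i(x)^2=\|e_x\|^2=1$ (completeness of the eigenbasis) gives
\[
\sum_{i=2}^N|\phi_i(x)||\phi_i(y)|\le\Big(\sum_{i\ge2}\phi_i(x)^2\Big)^{1/2}\Big(\sum_{i\ge2}\phi_i(y)^2\Big)^{1/2}=\sqrt{\big(1-\tfrac1N\big)\big(1-\tfrac1N\big)}\le1.
\]
Combining the two estimates yields $\big|\P_x^G(\overline X_t=y)-1/N\big|\le e^{-\kappa t}$.

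The only genuine subtlety, and the step I would flag, concerns the negative end of the spectrum. In discrete time the analogous bound would require the \emph{absolute} spectral gap $1-\max\{\lambda_2,|\lambda_N|\}$, since eigenvalues near $-1$ produce slowly alternating modes. In continuous time this difficulty evaporates: the generator eigenvalues $\lambda_i-1$ lie in $[-2,-\kappa]$, so modes with $\lambda_i$ close to $-1$ in fact decay at rate close to $2$ and never slower than $e^{-\kappa t}$. This is exactly why the one-sided gap $\kappa=1-\lambda_2$ suffices here, and it is the point at which the continuous-time formulation is essential; the remaining computations are the routine spectral estimates indicated above.
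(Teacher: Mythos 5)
Your proof is correct and is exactly the standard spectral-decomposition argument behind the result the paper invokes: the paper itself gives no proof here, citing \cite[Corollary~2.1.5]{Saloff97}, whose derivation is the same eigenbasis expansion of $e^{t(P-I)}$ followed by Cauchy--Schwarz that you carry out (your bound even comes with the slightly better factor $1-\tfrac1N$). Your remark on the negative end of the spectrum correctly identifies why the one-sided gap $\kappa=1-\lambda_2$ suffices in continuous time, which is indeed the relevant notion of spectral gap in Proposition~\ref{prop:collection}(iii).
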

Below, we will see that for the proof of Theorem~\ref{thm:graph-ext-proc} we only require precise control on the Green function of the $\ell_N$-good vertices which is our next goal.
\begin{lemma}\label{lem:goodgreenbound}
	Fix $\ell_0\in\N$ and let $\tilde\ell_N=\lfloor \varepsilon\log N \rfloor$ with $\varepsilon>0$. There exists a constant $k_4=k_4(\ell_0,r,\varepsilon)>0$ such that for all vertices $y$ at graph distance at most $\ell_0$ of an $\tilde\ell_N$-good vertex $x$
	\begin{equation}\label{eq:localapprox1}
		\lim_{N\to\infty}\mathbf P\big(|G_{\G_N}(x,y)-g(o,y')|\leq N^{-k_4}\big)=1,
	\end{equation}
	where $y'$ has depth $d_{\G_N}(x,y)$ in $\T$.
\end{lemma}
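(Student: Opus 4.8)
The plan is to fix the event from Proposition~\ref{prop:collection} (connectedness together with spectral gap at least $k_2$), which has $\mathbf P$-probability $1-o(1)$, and to work deterministically on it; the hypothesis already supplies a rooted isomorphism $B_{\tilde\ell_N}(x,\G_N)\cong B_{\tilde\ell_N}(o,\T)$ since $x$ is $\tilde\ell_N$-good. I would then split the defining time integral of $G_{\G_N}(x,y)$ at a threshold $T=\alpha\log N$, with $0<\alpha<\varepsilon$ to be tuned, writing
\[
G_{\G_N}(x,y)=\int_0^T\!\big(\P_x^{\G_N}(\overline X_t=y)-\tfrac1N\big)\,\d t+\int_T^\infty\!\big(\P_x^{\G_N}(\overline X_t=y)-\tfrac1N\big)\,\d t,
\]
and comparing against the identity $g(o,y')=\int_0^\infty\P_o^\T(\overline X_t=y')\,\d t$, which holds because the rate-$1$ continuous-time walk has expected total occupation time at $y'$ equal to its discrete expected number of visits, i.e.\ the tree Green function. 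The four resulting errors are the short-time coupling error, the subtracted constant $T/N$, the graph tail, and the tree tail; I would show each is a negative power of $N$.

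For the short-time regime I would couple the continuous-time walks on $\G_N$ and $\T$, started at $x$ and $o$, through the isomorphism of the $\tilde\ell_N$-balls: the two walks (killed upon reaching distance $\tilde\ell_N$ from their respective roots) have identical transition kernels, hence agree until one first reaches distance $\tilde\ell_N$. Since $y$ lies at distance $\ell_0<\tilde\ell_N$ from $x$ it is interior, so its killed heat kernels coincide exactly. Reaching distance $\tilde\ell_N$ requires at least $\tilde\ell_N$ jumps, and the number of jumps in $[0,t]$ is $\mathrm{Poisson}(t)$; by a Chernoff bound $\P(\mathrm{Poisson}(t)\ge\tilde\ell_N)\le N^{-J}$ for $t\le T$, with $J=J(\alpha,\varepsilon)>0$ whenever $\alpha<\varepsilon$. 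Thus
\[
\Big|\int_0^T\P_x^{\G_N}(\overline X_t=y)\,\d t-\int_0^T\P_o^\T(\overline X_t=y')\,\d t\Big|\le 2T\,N^{-J},
\]
while the subtracted constant contributes $T/N=N^{-1+o(1)}$.

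For the long-time regime on the graph, Lemma~\ref{lem:Saloff} applied with spectral gap $\kappa\ge k_2$ gives $|\P_x^{\G_N}(\overline X_t=y)-1/N|\le e^{-k_2 t}$, so the corresponding tail is at most $e^{-k_2 T}/k_2=N^{-k_2\alpha}/k_2$. For the tree tail I would use that the generator $L=P-I$ of the rate-$1$ walk on $\T$ has $\ell^2$-spectrum contained in $[-1-\rho,-1+\rho]$, where $\rho=2\sqrt{r-1}/r<1$ for $r\ge3$, whence $\P_o^\T(\overline X_t=y')=\langle\delta_{y'},e^{tL}\delta_o\rangle\le e^{-(1-\rho)t}$ and $\int_T^\infty\P_o^\T(\overline X_t=y')\,\d t\le e^{-(1-\rho)T}/(1-\rho)=N^{-(1-\rho)\alpha}/(1-\rho)$. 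Summing the four contributions and choosing $k_4$ strictly below $\min\{J,\,k_2\alpha,\,(1-\rho)\alpha,\,1\}$ absorbs the polylogarithmic prefactors and yields the bound $N^{-k_4}$, uniformly over all $\tilde\ell_N$-good $x$ and all $y$ within distance $\ell_0$.

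The main obstacle is the tension in the choice of $T$: the coupling error forces $T$ to be a strict fraction of $\tilde\ell_N=\varepsilon\log N$, whereas both tail estimates only improve as $T$ grows; the point to verify is that any $\alpha\in(0,\varepsilon)$ makes all four errors simultaneously a negative power of $N$, which the relative-entropy rate $J$ in the Poisson bound guarantees. A secondary delicate point is the exact (not merely approximate) agreement of the killed heat kernels, which rests on the rooted isomorphism of the $\tilde\ell_N$-balls and on $y$ being interior; establishing this cleanly, rather than the Poisson tail or the spectral estimate, is where I would spend the most care.
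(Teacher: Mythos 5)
Your proposal is correct, and it rests on the same three pillars as the paper's own proof: the perfect coupling of the walks on the isomorphic $\tilde\ell_N$-balls, Poisson concentration for the jump count (the walk needs at least $\tilde\ell_N$ jumps to break the coupling, which is exponentially unlikely before time $\alpha\log N$ when $\alpha<\varepsilon$), and the spectral-gap mixing bound of Lemma~\ref{lem:Saloff} on the high-probability event of Proposition~\ref{prop:collection}. The bookkeeping, however, is genuinely different. The paper splits $G_{\G_N}(x,y)$ at the random exit time $\tau_N$ of $B_{\tilde\ell_N-1}(x;\G_N)$, so that the pre-exit part coincides exactly with the \emph{killed} tree Green function $g_{\tilde\ell_N-1}(o,y')$ up to an $\E_x^{\G_N}[\tau_N]/N$ term coming from the zero-average correction (controlled via the outward drift of the walk on the tree-like ball); the tree-side error is then the truncation $|g(o,y')-g_{\tilde\ell_N-1}(o,y')|$, exponentially small in the depth, and the post-exit term $e_N$ is handled by exactly your two tail tools, namely a secondary deterministic cut at $\delta\log N$, Saloff-Coste beyond it, and Poisson concentration on $\{\tau_N<\delta\log N\}$. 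You instead cut both integrals at a deterministic $T=\alpha\log N$, which makes the graph and tree contributions symmetric and avoids the stopping time and the drift estimate for $\E_x^{\G_N}[\tau_N]$, at the price of needing a quantitative tree heat-kernel tail; your Kesten spectral-radius bound $\rho=2\sqrt{r-1}/r<1$, giving $\P_o^{\T}(\overline X_t=y')\le e^{-(1-\rho)t}$, does this job, where the paper gets the analogous decay from the killed Green function instead (one could also use the explicit formula for $g$). Both routes yield a polynomial rate $N^{-k_4}$ uniformly over $\tilde\ell_N$-good $x$ and all $y$ with $d_{\G_N}(x,y)\le\ell_0$, and your resolution of the tension in the choice of $T$ is sound: for any $\alpha\in(0,\varepsilon)$ the Poisson--Chernoff exponent $J=\varepsilon\log(\varepsilon/\alpha)-\varepsilon+\alpha$ is strictly positive, so all four error terms are simultaneously negative powers of $N$.
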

\begin{proof}
	For $\ell\in\N$, let $g_\ell$ denote the Green function of continuous time simple random walk on $\T$ started at the root and killed upon reaching depth $\ell+1$. Let $\tau_N=\tau_N(x)$ denote the first time that $\overline{X}$ visits $B_{\tilde\ell_N-1}(x;\G_N)^\mathsf{c}$ when started in $x\in\G_N$. Because $x$ is assumed to be $\tilde\ell_N$-good, there is a perfect coupling between $\overline{X}$ under $\P_x^{\G_N}$ and under $\P_o^{\T}$ up until $\tau_N$ and we have
	\begin{align*}
		\big|G_{\G_N}(x,y)-g(o,y') \big| & \leq \Big|\E_x^{\G_N}\Big[\int_0^{\tau_N}\1\{\overline{X}_t=y\}-\frac{1}{N}\,\textup{d}t\Big]-g_{\ell_N-1}(o,y') \Big|\\
		&\phantom{\leq\;}+ \E_x^{\G_N}\Big[\int_{\tau_N}^{\infty}\1\{\overline{X}_t=y\}-\frac{1}{N}\,\textup{d}t\Big] +|g(o,y')-g_{\ell_N-1}(o,y')|\\
		&\leq \frac{\E_x^{\G_N}[\tau_N]}{N} + \E_x^{\G_N}\Big[\int_{\tau_N}^{\infty}\1\{\overline{X}_t=y\}-\frac{1}{N}\,\textup{d}t\Big] +|g(o,y')-g_{\ell_N-1}(o,y')|.
	\end{align*}
	The first term is of order $\log N/N$, since $\overline{X}$ has a linear drift upwards before $\tau_N$, and the third term is clearly decaying exponentially in $\tilde\ell_N-1$. Consequently, it remains to bound the term
	\[
	e_N=\E_x^{\G_N}\Big[\int_{\tau_N}^{\infty}\1\{\overline{X}_t=y\}-\frac{1}{N}\,\textup{d}t \Big].    
	\]
	We have, for suitably chosen small $\delta>0$,
	\begin{align*}
		e_N\leq \frac{1}{k_2}N^{-\delta k_2} +\E_x^{\G_N}\Big[\1\{\tau_N < \delta\log N\}\int_{\tau_N}^{\delta\log N}\1\{\overline{X}_t=y\}-\frac{1}{N}\,\textup{d}t\Big]
	\end{align*}
	with high $\mathbf{P}$-probability by Proposition~\ref{prop:collection} and Lemma~\ref{lem:Saloff}. Additionally,
	\[
	\E_x^{\G_N}\Big[\1\{\tau_N < \delta\log N\}\int_{\tau_N}^{\delta\log N}\1\{\overline{X}_t=y\}-\frac{1}{N}\,\textup{d}t\Big]\leq \delta\log N\; \P(Y(\delta,N)>\ell_N),
	\]
	where $Y(\delta,N)\sim\operatorname{Poisson}(\delta\log N)$ counts the jumps of $\overline{X}$ up to time $\delta\log N$, and thus the remaining bound follows from Poisson concentration, which concludes the proof.
\end{proof}

\subsection{Proof of main result}
    \begin{proof}[Proof of Theorem~\ref{thm:graph-ext-proc}]
     Define $\Omega'_N$ as the event that $\G_N$ satisfies 
    	\ref{prop:collection}\textup{(i)}–\textup{(iv)} and the high probability event in the conclusion of Lemma~\ref{lem:goodgreenbound} applied with $\varepsilon=\varepsilon_0$ and $\ell_0=2$, where
        
    \[
    \varepsilon_0:=\frac{\sigma_r^2}{8 K_1\log(r-1)},\quad\text{and}\quad \ell_N:=\lfloor \varepsilon_0\log N \rfloor.
    \]
        We get by choosing $z=N^\alpha$ with fixed $\alpha\in(0,1)$ in Lemma~\ref{lem:badlem}, that
    	\[
    	\Omega''_N=\big\{|\text{bad}_N^{\ell_N}|\ \le\ N^\alpha (r-1)^{2\ell_N}\big\}\qquad \text{occurs with $\mathbf P$-probability }1-o(1).
    	\]
    	From now on, we work on a realisation $\G_N\in \Omega_N=\Omega'_N\cap\Omega''_N$, such that in particular $|\text{bad}_N^{\ell_N}|=o(N)$. 
    	Since also $\sqrt{\operatorname{Var}(\Psi_N(x))}=\sigma_r(1+O(N^{-k_4}))$, we have for 
    	\(
    	\widehat Z_x  :={\Psi_N(x)}/{\sqrt{\operatorname{Var}(\Psi_N(x))}},
    	\) that
    	\begin{equation}\label{eq:witherror}
    	\Psi_N(x)>\sigma_r\Big(a_N+\tfrac{t}{a_N}\Big)
    	\quad\Longleftrightarrow\quad
    	\widehat Z_x>a_N+\tfrac{t}{a_N}+\delta_N,\quad x\in \G_N,
    	\end{equation}
    	where $\delta_N=o(a_N^{-1})$ is a lower order error term.
    	By choice of $a_N$ and a Gaussian tail bound, we have $\bar\Phi(a_N+t/a_N+o(1))=N^{-1+o(1)}$, and by a union bound ,
    	\[
    	\P^{\G_N}\Big(\max_{x\in \text{bad}_N^{\ell_N}} \Psi_N(x) > u_\Psi\Big)
    	\ \le\ |\text{bad}_N^{\ell_N}|\,\bar\Phi(a_N+t/a_N+o(1))
    	\ =\ o(N)\cdot N^{-1+o(1)}\ =\ o(1),
    	\]
        where  \( u_\Psi = \sigma_r\big(a_N+\tfrac{t}{a_N}\big)\). In particular, it suffices to control the extremes over the $\ell_N$-good vertices, which we will denote by $V_N$. We define the rescaled point processes
    	\[
    	P_{\widehat Z}=\sum_{x\in V_N}\delta_{(\widehat Z_x-a_N)/b_N},\qquad 
    	P_\eta=\sum_{x\in V_N}\delta_{(\eta_x-a_N)/b_N},
    	\]
    	where $(\eta_x)_{x\in V_N}$ are i.i.d.\ $N(0,1)$, independent of $\Psi_N$.
        For $x\neq y\in \G_N$,
    	\[
    	\rho_{xy}\ :=\ \E[\widehat Z_x \widehat Z_y]
    	=\frac{G_{\G_N}(x,y)}{\sqrt{\operatorname{Var}(\Psi_N(x))\,\operatorname{Var}(\Psi_N(y))}}
    	= \frac{G_{\G_N}(x,y)}{\sigma_r^2}\big(1+O(N^{-k_4})\big).
    	\]
    	Using Proposition~\ref{prop:collection}\textup{(iv)} and $\sigma_r^2>0$, there exists $C<\infty$ such that
    		\[
    	0\le \rho_{xy}\ \le\ C\,(r-1)^{-d_{\G_N}(x,y)}\ \vee\ C\,N^{-k_3}.
    	\]
    	Let $S\subset\R$ be a bounded open set. For $h\in[0,1]$, the pair
    \[
    	\big(X_{x}(h),X_{y}(h)\big)
    	=\big(\sqrt{1-h}\,\eta_x+\sqrt{h}\,\widehat Z_x,\ \sqrt{1-h}\,\eta_y+\sqrt{h}\,\widehat Z_y\big)
    \]
    is a centred, unit‐variance Gaussian vector with correlation $\rho_{xy}(h)=h\,\rho_{xy}$. We now argue precisely as in the proof of Theorem~\ref{thm:tree-ext-proc}, aiming to bound
    
    %
    \[
    T_N(\G_N)
    :=
    \frac{1}{b_N^{2}}\int_0^1\sum_{x\ne y}
    \big|\Sigma_{xy}(N,1)-\Sigma_{xy}(N,0)\big|\,
    \P\Big(\tfrac{X_{x}(h)-a_N}{b_N}\in S,\ \tfrac{X_{y}(h)-a_N}{b_N}\in S\Big)\,\mathrm{d}h.
    \]
    Setting $\Sigma_{xy}(N,1)-\Sigma_{xy}(N,0)=\rho_{xy}$, recalling that 
    $q\mapsto H(q)=(1-q^2)^{-1/2}e^{-a_N^2/(1+q)}$ is increasing on $[0,1)$ and that 
    $\rho_{xy}(h)\le\rho_{xy}$, we obtain from an application of Lemma~\ref{lem:doublegauss} that
    \[
    \frac{1}{b_N^{2}}\P\Big(\tfrac{X_{x}(h)-a_N}{b_N}\in S,\ \tfrac{X_{y}(h)-a_N}{b_N}\in S\Big)
    \ \le\
    C_S\,H(\rho_{xy}),
    \qquad C_S:=\tfrac{|S|^2}{2\pi}.
    \]
    Consequently,
    \[
    T_N(\G_N)\ \le\ C_S\sum_{x\ne y}\rho_{xy}\,H(\rho_{xy}).
    \]
    
    	As on the tree, we group unordered pairs by $k=d_{\G_N}(x,y)$ and set
    	\[
    	n_k := \big|\{\{x,y\}\subset V_N : d_{\G_N}(x,y)=k\}\big|, \qquad \rho_k:=\sup\{\rho_{xy}:d_{\G_N}(x,y)=k\},\quad k\geq 1.
    	\]
    	By Proposition~\ref{prop:collection}\textup{(i)}–(ii), there exist $C_1,C_2>0$ such that
    	\begin{equation}\label{eq:nk-rhok-bounds}
    		n_k \le C_1 N (r-1)^k,\qquad 
    		\rho_k \le C_2 (r-1)^{-k},\qquad 1\le k\le \operatorname{diam}(\G_N),
    	\end{equation}
    	and $\operatorname{diam}(\G_N)=O(\log N)$ by Proposition~\ref{prop:collection}\textup{(i),(iii)}.
    	Now fix $\delta>0$ and set
    	\[
    	k_\star \;=\; \Big\lceil \frac{k_3+\delta}{\log(r-1)} \,\log N \Big\rceil.
    	\]
    	Then for $k>k_\star$, $\rho_k\le C N^{-k_3}$ and we decompose
    	\[
    	\sum_{x\ne y}\rho_{xy}\,H(\rho_{xy})
    	\;=\;
    	\underbrace{\sum_{1\le k\le k_\star}
    		\sum_{\substack{\{x,y\}\subset V_N\\ d_{\G_N}(x,y)=k}}
    		\rho_k H(\rho_k)}_{=:\ \varepsilon_N^{\mathrm{near}}}
    	\;+\;
    	\underbrace{\sum_{k>k_\star}
    		\sum_{\substack{\{x,y\}\subset V_N\\ d_{\G_N}(x,y)=k}}
    		\rho_k H(\rho_k)}_{=:\ \varepsilon_N^{\mathrm{far}}}.
    	\]
    	For $1\le k\le k_\star$, using \eqref{eq:nk-rhok-bounds} and the monotonicity of $H$,
    	\[
    	\varepsilon_N^{\mathrm{near}}
    	\ \le\ \sum_{1\le k\le k_\star} n_k\,\rho_k\,H(\rho_k)
    	\ \le\ \sum_{1\le k\le k_\star} \big(C_1 N (r-1)^k\big)\big(C_2 (r-1)^{-k}\big)\,H(\rho_1)
    	\ \le\ C\,N\,k_\star\,H(\rho_1),
    	\]
    	where $\rho_1\le c_0<1$ depends only on $r$. 
    	As $a_N^2=2\log N+O(\log\log N)$, we find
    	\[
    	H(\rho_1)
    	=\frac{1}{\sqrt{1-\rho_1^2}}
    	\exp\!\Big(-\frac{a_N^2}{1+\rho_1}\Big)
    	\ \le\ C\,N^{-\,\frac{2}{1+\rho_1}}(\log N)^{O(1)}.
    	\]
    	Since $\frac{2}{1+\rho_1}>1$ for $r\ge3$, we conclude $\varepsilon_N^{\mathrm{near}}=o(1)$. For $k>k_\star$, $\rho_k\le C N^{-k_3}$, hence
    	\[
    	\varepsilon_N^{\mathrm{far}}
    	\ \le\ \binom{N}{2}\cdot 
    	\frac{C N^{-k_3}}{\sqrt{1-C^2N^{-2k_3}}}\,
    	\exp\!\Big(-\frac{a_N^2}{1+CN^{-k_3}}\Big)
    	\ =\ N^{-k_3+o(1)}\ =\ o(1).
    	\]
    	%
    	Combining both bounds, we see that $T_N(\G_N)=o(1)$, and thus by Theorem~\ref{thm:gauss-comp-ppp} we obtain
    	\[
    	\Big(P_{\widehat Z},\ \max_{x\in V_N}\tfrac{\widehat Z_x-a_N}{b_N}\Big)
    	\ \Longrightarrow\ (\mathcal P_\infty,\ \max\mathcal P_\infty).
    	\]
    	Recalling  $
    	$  that the terms $\delta_N=o(a_N^{-1})$ in \eqref{eq:witherror} are negligible, 
    	we obtain the same limit for the point processes
    	\(
        \sum_{x\in V_N}\delta_{(Z_x-a_N)/b_N}.
    	\)
    \end{proof}

\end{document}